\documentclass{amsart}

\usepackage{amssymb}
\usepackage{amsmath}
\usepackage{amsthm}
\usepackage{verbatim}
\edef\restoreparindent{\parindent=\the\parindent\relax}
\usepackage[parfill]{parskip}
\usepackage{diagbox}
\usepackage{thmtools}
\usepackage{enumitem}
\usepackage{hyperref}
\usepackage{thm-restate}

\usepackage[backend=bibtex,maxbibnames=99]{biblatex}
\def\indep{\mathrel{\raise0.2ex\hbox{\ooalign{\hidewidth$\vert$\hidewidth\cr\raise-0.9ex\hbox{$\smile$}}}}}

\newcommand{\Piin}{\Pi^{\mathrm{in}}}
\newcommand{\Siin}{\Sigma^{\mathrm{in}}}

\newcommand{\Ltype}{\mc L_{\operatorname{type}}}
\newcommand{\Loin}{\mc L_1^{\mathrm{in}}}
\newcommand{\Linf}{\mc L_{\omega_1\omega}}

\newcommand{\ctb}{{\aleph_0}}

\let\epsilon\varepsilon
\newcommand{\vp}{\varphi}
\let\phi\varphi

\newcommand{\mc}{\mathcal}
\newcommand{\mf}{\mathfrak}
\newcommand{\sbeq}{\subseteq}
\newcommand{\speq}{\supseteq}
\newcommand{\sat}{\vDash}
\newcommand{\bsl}{\backslash}

\newcommand{\set}[1]{\left\{#1\right\}}
\newcommand{\inv}{^{-1}}
\newcommand{\ang}[1]{\left\langle#1\right\rangle}

\mathchardef\mhyph="2D

\let\bar\overline 
\let\0\varnothing 
\let\emptyset\varnothing 
\let\hat\widehat

\DeclareMathOperator{\Th}{Th}
\DeclareMathOperator{\tp}{tp}
\DeclareMathOperator{\dom}{dom}
\DeclareMathOperator*{\bigdoublewedge}{\bigwedge\mkern-15mu\bigwedge}

\theoremstyle{plain}
\newtheorem{thm}{Theorem}[section]
\newtheorem{cor}[thm]{Corollary}
\newtheorem{prop}[thm]{Proposition}
\newtheorem{lem}[thm]{Lemma}
\newtheorem{conj}[thm]{Conjecture}

\theoremstyle{definition}
\newtheorem{defn}[thm]{Definition}
\newtheorem{notn}[thm]{Notation}
\newtheorem{rmk}[thm]{Remark}
\newtheorem{quest}[thm]{Question}

\theoremstyle{remark}

\title{The Type $\omega$-Vaught's Conjecture for $\omega$-stable Theories}
\author{Hongyu Zhu}

\address{University of Wisconsin-Madison,
Department of Mathematics,
480 Lincoln Drive, Madison, WI, 53706, USA}
\email{\href{mailto:hongyu@math.wisc.edu}{hongyu@math.wisc.edu}}
\urladdr{\url{https://people.math.wisc.edu/\string~hzhu322/}}

\begin{document}
\begin{abstract}
  We propose a new variant of Vaught's conjecture, based on Martin's (model-theoretic) conjecture and the $\omega$-Vaught's conjecture of Gonzalez and Montalb\'an. For $\omega$-stable theories, we prove the conjecture by analyzing in detail Bouscaren's and Shelah-Harrington-Makkai's proofs of Martin's conjecture and Vaught's conjecture, respectively, for $\omega$-stable theories. In particular, when there are countably many countable models, we improve the required number of quantifiers for a Scott sentence from $\omega+\omega$ to $\omega+5$, and we obtain a similar bound of $\omega+6$ for the uncountable case.
\end{abstract}
\maketitle

\section{Introduction}
One of the longest standing open questions in model theory is Vaught's conjecture \cite{vaught1961denumerable}, which states that the number of countable models of a first-order theory must be either continuum or at most countable (under the negation of the continuum hypothesis). Various approaches have been taken to address the question, with methods ranging from model theory \cite{BuechlerVC}, descriptive set theory \cite{topVC}, to computability theory \cite{MontalbanVC}.

One approach, Martin's model-theoretic conjecture (\autoref{martin}), attempts to describe the countable models when there are fewer than $2^\ctb$ many of them. More precisely, the description is to be a countable theory of first-order sentences in the complete types of $T$ (i.e. sentences in $\mc L_1(T)$; see \autoref{L1}) which determines the model up to isomorphism, and Martin's conjecture asserts the existence of such descriptions when the number of countable models is fewer than continuum.

A more recent approach is the $\omega$-Vaught's conjecture of Gonzalez and Montalb\'an \cite{wVC} (see also \autoref{wVC}). Similar to Martin's conjecture, the $\omega$-Vaught's conjecture asserts the existence of descriptions for countable models as well. However, both the descriptions and the models which are involved are different. First, the description is now an infinitary sentence (i.e. in $\Linf$). Despite the fact that all countable models have a sentence in $\Linf$ that characterizes it up to isomorphism (known as a Scott sentence) \cite{Scott1965}, further restrictions on the complexity of the infinitary sentence are placed. Second, the $\omega$-Vaught's conjecture requires certain behaviors on the models even when $T$ has continuum many models; Specifically, if $T$ is $\Piin_\alpha$, then at a complexity of $\Piin_{\alpha+n}$ with $n<\omega$, one should already be able to distinguish continuum many models.

In combination of both conjectures, we propose a new conjecture, the Type $\omega$-Vaught's conjecture (\autoref{twVC}). The language $\mc L_1^{\text{in}}(T)$ (see \autoref{L1}) is slightly more than just $\mc L_1(T)$, though complexity-wise it is no more than that of $\mc L_1(T)$: We allow infinitary conjunctions and disjunctions for finitely many levels. In addition, we require behaviors from the models whether $T$ has at most $\ctb$ models or $2^\ctb$ models: When it has at most $\ctb$, there needs to be a Scott sentence in the aforementioned $\mc L_1^{\text{in}}(T)$; And when continuum many, the same language should be able to distinguish continuum many of them. The main result of this paper is to establish the Type $\omega$-Vaught's conjecture for $\omega$-stable theories, using Shelah-Harrington-Makkai's proof of Vaught's conjecture for $\omega$-stable theories \cite{SHM} and Bouscaren's proof of Martin's conjecture for $\omega$-stable theories \cite{Bouscaren}. In particular, when there are fewer than $2^\ctb$ many countable models, we improve the number of quantifiers needed from $\omega+\omega$ (as in Bouscaren's proof) to $\omega+5$, and we obtain a similar bound of $\omega+6$ for the uncountable case.

We note that there have been other similar approaches to Vaught's conjecture. In particular, the strong Martin's conjecture proposed by Wagner \cite{wagner_smc} requires, in addition to Martin's conjecture, that the theory should have $2^\ctb$ completions in $\mc L_1(T)$ when it has $2^\ctb$ countable models. Itai \cite{Itai_1991} did a similar analysis of the strong Martin's conjecture for $\omega$-stable theories, though the conjecture in its full generality was shown to be false by Gao \cite{gao_smc}.

This article contains four sections, starting with the current introductory one. The second section defines formally the three aforementioned conjectures, namely Martin's conjecture, the $\omega$-Vaught's conjecture, and the Type $\omega$-Vaught's conjecture. We present a full proof of our main result, the Type $\omega$-Vaught's conjecture for $\omega$-stable theories, in the third section, and conclude with a series of open questions in the fourth and last section.

\section{Background}
Throughout the remainder of the article, we work in a fixed countable language $\mc L$.

\subsection{Martin's conjecture}
First, recall Martin's conjecture: Assume $T$ is complete first-order $\mc L$-theory.
\begin{defn}\label{L1}
Let $\Ltype(T)=\mc L\cup\set{\bigdoublewedge_{\vp(\bar x)\in p(\bar x)}\vp(\bar x)\mid p(\bar x)\in S_n(T),n\in\omega}$ be the language obtained from $\mc L$ by adding all the complete types over $T$ as additional predicates.

Let $\mc L_1(T)$ be the first-order closure of $\Ltype(T)$, i.e. the smallest fragment of $\Linf$ containing $\Ltype(T)$ (closed under $\wedge, \neg$, and universal quantification).
\end{defn}

\begin{conj}[Martin]\label{martin}
Suppose $T$ has fewer than $2^\ctb$ countable models. Then the $\mc L_1(T)$-theory of any $\mc M\sat T$ is $\ctb$-categorical.
\end{conj}

\begin{rmk}
  To obtain a Scott sentence we need to take an infinite conjunction over the $\mc L_1(T)$-theory, which would then be $\Piin_{\omega+\omega}$ over the base theory.
\end{rmk}

\subsection{$\omega$-Vaught's conjecture}
Next, we discuss the $\omega$-Vaught's conjecture of Gonzalez and Montalb\'an \cite{wVC} and generalize to arbitrary limit ordinals $\alpha$: Assume instead  $T$ is an $\Linf$-sentence.
\begin{defn}
The \emph{Vaught ordinal} of $T$, $vo(T)$, is the least $\beta$ such that:
\begin{itemize}
  \item If $T$ has countably many countable models, then all of them have Scott rank $<\beta$.
  \item If $T$ has uncountably many countable models, then there are uncountably many of them which are not $\Piin_\beta$-equivalent to each other.
\end{itemize}
\end{defn}
\begin{rmk}
The conjecture we are about to state bounds the Vaught ordinal above by some limit ordinal. As a result, it mostly does not matter which notion of Scott rank we use, since most variants differ only by a finite amount. For example, we can replace ``have Scott rank $<\beta$'' above by ``have a $\Piin_\beta$ Scott sentence,'' or use the robuster notion of Scott rank of Montalb\'an \cite{robusterSR}.
\end{rmk}

\begin{conj}[$\alpha$-Vaught's conjecture]\label{wVC}
For a limit ordinal $\alpha$, say $T$ satisfies the \emph{$\alpha$-Vaught's conjecture} if for every $\beta<\omega_1$ such that $T$ is equivalent to a $\Piin_\beta$-sentence, we have $vo(T)\le\beta+\alpha$.
\end{conj}
\begin{rmk}
This version differs slightly from the original formulation of $\omega$-Vaught's conjecture, which requires both $T$ and all (infinitary) sentences that imply $T$ to satisfy the bound on the Vaught ordinal. As we work with only first-order theories, where the only consistent theory that can imply a complete theory $T$ is $T$ itself, this statement should be sensible to make.
\end{rmk}

\subsection{Type $\omega$-Vaught's conjecture}
Now we propose a new conjecture, the Type $\omega$-Vaught's conjecture, inspired by both conjectures above. Let $T$ be a complete first-order theory again.
\begin{defn}
Let $\mc L_1^{\text{in}}(T)$ be set of $\Piin_{<\omega}$-sentences over $\Ltype(T)$ (equivalently over $\mc L_1(T)$), where $\Piin_{<\omega}=\bigcup_{n<\omega}\Piin_{n}$.
\end{defn}
\begin{rmk}
\begin{itemize}
  \item $\mc L_1(T)\sbeq\mc L_1^{\text{in}}(T)$.
  \item $\mc L_1(T)$ and $\mc L_1^{\text{in}}(T)$ have the same complexity: If all types over $T$ are $\forall_n$-axiomatizable for some $n$, then $\mc L_1(T)$ and $\mc L_1^{\text{in}}(T)$ are both subsets of $\Piin_{<\omega}$; otherwise, in general both will contain strictly $\Piin_\omega$-sentences, but are still subsets of $\Piin_{<\omega+\omega}$.
\end{itemize}
\end{rmk}

\begin{conj}[Type $\omega$-Vaught's conjecture]\label{twVC}
For any complete first-order theory $T$:
\begin{itemize}
  \item If $T$ has countably many countable models, then any countable $\mc M\sat T$ has a Scott sentence in $\Loin(T)$.
  \item If $T$ has uncountably many countable models, then there are $2^\ctb$ countable models of $T$ which are pairwise not $\mc L_1^{\mathrm{in}}(T)$-equivalent.
\end{itemize}
\end{conj}

\begin{rmk}\label{wVCcomparison}
The Type $\omega$-Vaught's conjecture for $T$ implies the $(\omega+\omega)$-Vaught's conjecture for $T$. If in addition that either every type over $T$ is \emph{boundedly axiomatizable} (i.e. $\forall_n$-axiomatizable for some $n\in\omega$) or $T$ is not boundedly axiomatizable, then it actually implies the $\omega$-Vaught's conjecture for $T$.
\end{rmk}

\begin{notn}
  When $T$ is obvious from context, we will abbreviate $\mc L_1(T)$ as just $\mc L_1$, and similarly for $\Loin$ and $\Ltype$.
\end{notn}

Our main result is stated as follows.

\begin{restatable*}{thm}{main}\label{main}
The Type $\omega$-Vaught's conjecture holds for any complete $\omega$-stable theory $T$. In fact, we obtain uniform bounds on the quantifier complexity:
\begin{itemize}
  \item If $T$ has countably many countable models, then any countable $\mc M\sat T$ has a $\operatorname{d-\Siin_4}(\Ltype)$ Scott sentence.
  \item If $T$ has uncountably many countable models, then there are $2^\ctb$ countable models of $T$ which are pairwise not $\Piin_6(\Ltype)$-equivalent.
\end{itemize}
\end{restatable*}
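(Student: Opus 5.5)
The proof splits along the Shelah--Harrington--Makkai dichotomy for $\omega$-stable $T$. Either every countable model $\mc M$ is prime and minimal over a finite tuple $\bar a$ together with a finite ``tree'' of independent, finitely-satisfiable, strongly regular types (the \emph{decomposition} analysis), or some configuration yields a perfect set of pairwise non-isomorphic countable models. The main work is to track, at each step of both branches, how many quantifier alternations over $\Ltype$ are actually used, rather than taking the full $\mc L_1(T)$-theory as Bouscaren does.

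\textbf{Few models.} When $T$ has countably many countable models, I will follow Bouscaren's proof that each countable $\mc M$ is determined by finite data. Concretely, $\mc M$ is (constructible and) prime over a finite independent tree $\bar a=(a_\eta)_{\eta\in I}$ together with, for each node, the dimension of the relevant regular type realized in $\mc M$ over its predecessor. The Scott sentence will take the form
\[
\exists\bar x\,\bigl(p(\bar x)\wedge \Phi(\bar x)\bigr)\wedge\neg\Psi ,
\]
where $p=\tp(\bar a)$ is a single atomic $\Ltype$-formula and the pieces are as follows:
\begin{enumerate}[label=(\roman*)]
\item $\Phi(\bar x)$ is a $\Piin_3(\Ltype)$ condition expressing that $\mc M$ is atomic (hence prime) over $\bar x$. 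This reads ``every finite tuple $\bar y$ satisfies some isolated type over $\bar x$''. Since isolated types over $\bar x$ are given by types in $\Ltype$, this is $\forall\bigvee$ of $\Ltype$-atoms, plus a bit more to encode the orthogonality and dimension conditions.
\item The dimension of each regular type is either fixed finite or $\aleph_0$. Finite dimension $=k$ is $\exists_k$ together with $\neg\exists_{k+1}$ of independent realizations, and independence of a tuple is again expressible by $\Ltype$ predicates since forking is type-definable through complete types. Infinite dimension is $\forall\exists$.
\end{enumerate}
Putting these together should give the shape $\Siin_4\wedge\Piin_4$, i.e.\ $\operatorname{d-\Siin_4}(\Ltype)$. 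The check that this pins $\mc M$ down up to isomorphism is the uniqueness of prime models over the tree with prescribed dimensions, which I quote from Bouscaren and Shelah.

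\textbf{Many models.} When $T$ has uncountably many countable models, SHM give one of two sources of $2^\ctb$ models: an ENI-DOP or ENI-deep configuration, or nonsmall behaviour over a finite set. Nonsmallness is impossible for $\omega$-stable $T$ over $\emptyset$, but it can occur over a finite tuple. In each case I will exhibit a perfect family of models coded by reals $X\subseteq\omega$, built as prime models over trees or over configurations with dimensions depending on $X$. I then need a $\Piin_6(\Ltype)$ sentence $\theta_n$ that holds exactly when $n\in X$. The natural candidate is $\theta_n=$ ``there exist a finite tuple of type $p_n$ over which some regular type has infinite dimension'', or a similar statement about the existence of a branch of depth $n$. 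Here ``$\bar x$ realizes the configuration'' costs one level, ``prime over'' costs three levels (as in (i)), and ``infinite dimension'' costs $\forall\exists$; this is how the count reaches $6$.

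\textbf{Main obstacle.} The hardest step is the deep (non-DOP) case. There the distinguishing invariant is a tree of unbounded depth, and I must still find finitary-depth sentences separating continuum many models. My first attempt is to code $X$ into dimensions at level one of a fixed depth-$2$ subtree, so that only boundedly many quantifier levels are needed.
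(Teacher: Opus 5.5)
Your Scott sentence rests on clause (i), that $\mc M$ is atomic over the finite tuple $\bar x$, and this is false in general. A countable model of an $\omega$-stable theory with few countable models need not be atomic over any finite tuple; the countable algebraically closed field of characteristic $0$ and infinite transcendence degree is an example. So $\exists\bar x\,(p(\bar x)\wedge\Phi(\bar x))$ fails in such models. Your ``bit more to encode the orthogonality and dimension conditions'' is exactly the missing content, since ``prime over $\bar a$ plus bases of prescribed dimensions'' is not directly expressible.

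The paper instead uses Bouscaren's description. Over a fixed tuple $a^0$ it records:
\begin{itemize}
\item the dimensions of $q_{\bar d}$ for \emph{every} $\bar d$ of a given type;
\item the dimension vectors $D(\bar m)$ of all tuples realizing finitely many types $r$;
\item which pairs of vectors occur on dependent pairs;
\item the size of a maximal independent set of tuples with a prescribed $D$-value.
\end{itemize}
Keeping this cheap needs two facts: forking is quantifier-free in $\Ltype$ (by finite multiplicity), and Bouscaren's $\sim_r$ has finitely many classes, so $D(\bar m)=\bar s$ is $\forall_2$. Even then the direct count is only $\Siin_5$: an outer $\exists a^0$ sits in front of a $\Piin_4$ cardinality clause.

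The bound $\operatorname{d-\Siin_4}$ is reached only indirectly. Conjoining the negations of the $\Siin_5$ Scott sentences of the countably many other models gives a $\Piin_5$ Scott sentence. A relativized A.~Miller theorem then converts the $\Siin_5$ and $\Piin_5$ Scott sentences into a $\operatorname{d-\Siin_4}$ one. Your $\Siin_4\wedge\Piin_4$ shape, with $\Psi$ never specified, has no argument behind it.

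In the many-models case your case division is wrong. An $\omega$-stable theory has countably many types over every countable set, so it is small over every finite tuple and nonsmallness never arises. The Shelah--Harrington--Makkai criterion has four failure modes, each needing its own distinguishing formula:
\begin{itemize}
\item infinitely many nonorthogonality classes of supportive types;
\item infinitely many classes of ENI types nonorthogonal to a fixed finite set;
\item failure of the depth condition;
\item ENI-DOP.
\end{itemize}
Your plan omits the two width cases. It also misdescribes the depth case, which is handled by the invariant $m([x])$ (independent realizations of $c/b$ over which $p$ has finite dimension), giving $\Piin_4$ formulas, not by trees of unbounded depth.

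You offer nothing for ENI-DOP, which is the hard case. There one recovers the coding graph $G$ from $\mc M_G$ as a quotient by $\not\indep$. Its domain is $\Siin_2(\Ltype)$, and its edge relation is $\Siin_4(\Ltype)$, obtained by asking for infinitely many common $H$-neighbours to avoid set quantification. Models are then separated by which cycle lengths occur, at cost $\Piin_5$.

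Finally, all these configurations live over a finite parameter, which you never address. Removing it costs one existential quantifier, but $\exists\bar x\,\chi(\bar x)$ can hold in another model at a different parameter. To keep models separated one needs mutually exclusive witnessing formulas and a diagonal selection of $2^{\aleph_0}$ models. This is what turns $\Piin_5$ into $\Siin_6$ and gives the $\Piin_6$ bound. Your tally ``configuration $+$ prime over $+$ infinite dimension $=6$'' does not correspond to any actual formula.
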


The uniform finite bounds in the theorem gives a more precise result in view of the $\alpha$-Vaught's conjecture:

\begin{cor}\label{wVCbound}
  The $(\omega+6)$-Vaught's conjecture holds for all complete $\omega$-stable theories.
\end{cor}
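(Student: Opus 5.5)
We would derive the corollary from \autoref{main} by translating complexity over $\Ltype$ into complexity over $\mc L$. Any $\beta$ with $T$ equivalent to a $\Piin_\beta$-sentence satisfies $\beta+\omega+6\ge\omega+6$: if $\beta$ is finite then $\beta+\omega=\omega$, and otherwise the sum only gets larger. So it suffices to show $vo(T)\le\omega+6$. We read ``Scott rank $<\beta$'' as ``has a $\Piin_{\beta}$ Scott sentence'', as permitted by the remark after the definition of the Vaught ordinal.

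\textbf{Step 1 (translation lemma).} For every $n\ge 1$, every $\Piin_n(\Ltype)$-formula is equivalent modulo $T$ to a $\Piin_{\omega+n}(\mc L)$-formula, and every $\Siin_n(\Ltype)$-formula to a $\Siin_{\omega+n}(\mc L)$-formula.

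The base case needs care. A positive type predicate $\bigdoublewedge_{\vp\in p}\vp$ is a countable conjunction of finitary formulas, hence $\Piin_\omega$. Its negation, however, would naively be $\Siin_\omega$ and would cost an extra level. We avoid this using $\omega$-stability. $S_n(T)$ is countable, so modulo $T$ we have $\neg p(\bar x)\equiv\bigvee_{q\in S_n(T),\,q\neq p}q(\bar x)$, a countable disjunction of positive type predicates. Hence we may assume the literals of $\Ltype$-formulas are finitary $\mc L$-literals or positive type predicates, all of which lie in $\Piin_\omega\subseteq\Siin_{\omega+1}\cap\Piin_{\omega+1}$.

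An induction on $n$ then gives the lemma. A $\Siin_1(\Ltype)$-formula is a countable disjunction of existentially quantified conjunctions of such literals. Each such conjunction is $\Piin_\omega$, so the whole formula is $\Siin_{\omega+1}$. Dually, $\Piin_1(\Ltype)$-formulas are $\Piin_{\omega+1}$. The inductive steps only add one infinitary quantifier block at a time, so each raises the level by exactly one.

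\textbf{Step 2 (countably many models).} By \autoref{main}, each countable $\mc M\sat T$ has a $\operatorname{d-\Siin_4}(\Ltype)$ Scott sentence, i.e.\ a conjunction of a $\Siin_4(\Ltype)$ and a $\Piin_4(\Ltype)$ sentence. By Step 1 this is a conjunction of a $\Siin_{\omega+4}$ and a $\Piin_{\omega+4}$ sentence over $\mc L$. Both conjuncts lie in $\Piin_{\omega+5}$, so every countable model has a $\Piin_{\omega+5}$ Scott sentence, well below the bound $\omega+6$.

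\textbf{Step 3 (uncountably many models).} \autoref{main} gives $2^\ctb$ countable models that are pairwise not $\Piin_6(\Ltype)$-equivalent. Each $\Piin_6(\Ltype)$-sentence is by Step 1 equivalent modulo $T$ to a $\Piin_{\omega+6}(\mc L)$-sentence. Hence $\Piin_{\omega+6}$-equivalence implies $\Piin_6(\Ltype)$-equivalence, and these models are pairwise not $\Piin_{\omega+6}$-equivalent. Together with Step 2, this gives $vo(T)\le\omega+6\le\beta+\omega+6$.

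The only delicate point is the base case of Step 1. Without $\omega$-stability, negated type predicates would raise the bound by one, so the use of countably many types there is essential.
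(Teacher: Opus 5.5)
Your argument is correct and follows the route the paper intends (the paper states the corollary without a written proof). Type predicates are $\Piin_\omega$ over $\mc L$, so $\Siin_n(\Ltype)$ and $\Piin_n(\Ltype)$ translate modulo $T$ to $\Siin_{\omega+n}$ and $\Piin_{\omega+n}$. Hence the $\operatorname{d-\Siin_4}(\Ltype)$ Scott sentences become $\Piin_{\omega+5}$ (conjoin with the $\Piin_\omega$ sentence $\bigwedge T$ to make them Scott sentences among all countable structures), and $\Piin_6(\Ltype)$-inequivalence yields $\Piin_{\omega+6}$-inequivalence, giving $vo(T)\le\omega+6\le\beta+\omega+6$.

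Your closing remark needs correcting. The detour through the countability of $S_n(T)$ is unnecessary, and $\omega$-stability is not essential there. The reason is that $\neg p(\bar x)\equiv\bigvee_{\vp\in p}\neg\vp(\bar x)$ is already $\Siin_\omega\subseteq\Siin_{\omega+1}\cap\Piin_{\omega+1}$, so negated type predicates cost no extra level for any $T$. Moreover, your substitution $\neg p\mapsto\bigvee_{q\ne p}q$ is only harmless on the $\Siin$ side. Under a universal quantifier it would cost a level, so your ``dually'' must be read as taking negations of the $\Siin$ case.
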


The analysis relies heavily on the proofs of Vaught's conjecture for $\omega$-stable theories by Shelah-Harrington-Makkai \cite{SHM} and of Martin's conjecture for $\omega$-stable theories by Bouscaren \cite{Bouscaren}.

\section{The Analysis}
This section is devoted to proving the main result, \autoref{main}. We will start in the first subsection by proving definability results used throughout the entire section. This gives us enough power to prove the result for when $T$ has countably many countable models in the second subsection. For the uncountable case, we start by justifying working over a fixed finite tuple of parameters in the third subsection, and then discuss the four subcases (as in \autoref{ctbconds}) in the fourth subsection. We exhibit distinguishing formulas for each of the four cases in the remaining four subsections (which together prove the uncountable case).

From now on, let $T$ be a fixed complete $\omega$-stable theory.

\subsection{Expressing non-forking independence}
We first show that the non-forking independence relation is expressible in $\Loin$. The main tool we will use is the following proposition. Recall that if $A\equiv B$ via some elementary map $f:A\to B$ and $p$ is a type over $A$, then $p_B$ denotes the type $f(p)$ (which is a type over $B$). In particular, if $a$ realizes $p$, then for any $b$, we have $b$ realizes $p_B$ if and only if $\tp(Aa)=\tp(Bb)$.

\begin{prop}\label{indep}
For any type $p\in S_n(T)$. There is a quantifier-free $\mc L_{type}$-formula $Indep_{p}(\bar x,\bar y,\bar z)$ (technically, a different formula for each $n$ and each arity of $\bar z$) such that for any $\mc M\sat T$ and $n$-tuples $\bar a,\bar b, \bar c$ in $\mc M$ with matching arities:

$\mc M\sat Indep_{p}(\bar a,\bar b,\bar c)\iff(\tp(\bar {ab})=p)\wedge(\bar c\indep_{\bar a}\bar b)$.
\end{prop}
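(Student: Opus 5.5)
Here $p$ is a complete type in the variables $\bar x\bar y$ (the type of $\bar a\bar b$). The formula $Indep_p(\bar x,\bar y,\bar z)$ will be a disjunction of conjunctions of atomic $\Ltype$-predicates. The plan is to reduce independence to a condition on the complete type of $\bar a\bar b\bar c$ alone, and then to encode the set of good types by a single predicate.

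\emph{Step 1: independence depends only on the type.} Whether $\bar c\indep_{\bar a}\bar b$ holds is determined by $\tp(\bar a\bar b\bar c)$. If $\tp(\bar a\bar b\bar c)=\tp(\bar a'\bar b'\bar c')$, then an automorphism of the monster model carries one triple to the other. Forking is automorphism invariant, so $\bar c\indep_{\bar a}\bar b$ if and only if $\bar c'\indep_{\bar a'}\bar b'$. Let $Q_p$ be the set of complete types $q(\bar x,\bar y,\bar z)\in S(T)$ such that $q\restriction\bar x\bar y=p$ and some (equivalently every) realization $\bar a\bar b\bar c$ of $q$ satisfies $\bar c\indep_{\bar a}\bar b$. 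Then
\[
(\tp(\bar a\bar b)=p)\wedge(\bar c\indep_{\bar a}\bar b)\iff \tp(\bar a\bar b\bar c)\in Q_p .
\]

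\emph{Step 2: count the good types.} $Q_p$ is in bijection with the nonforking extensions of $r=\tp(\bar c/\bar a)$ to $\bar a\bar b$, as $r$ ranges over the complete types over $\bar a$ in the variables $\bar z$. By $\omega$-stability each such $r$ has only finitely many nonforking extensions to $\bar a\bar b$; the bound is the multiplicity of $r$. However, there can be up to continuum many (in any case infinitely many) $r$. So $Q_p$ may be infinite, and a single finite disjunction of type predicates may not suffice.

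\emph{Step 3: build the formula.} I see two options. If the paper's notion of $\Ltype$-formula permits a disjunction $\bigvee_{q\in Q_p} q(\bar x,\bar y,\bar z)$ of type predicates, that disjunction already works by Step 1. The statement insists on a quantifier-free $\Ltype$ formula, so the cleaner route is to add one further predicate. Since $S(T)$ is countable in the $\omega$-stable case, the disjunction is countable. It also defines an $\mathrm{Aut}$-invariant set, which is a union of type-realizations. I expect this to be the main obstacle: the right move is to show this set is itself type-definable, so that it is a single predicate of $\Ltype$-complexity.

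For the type-definability I would use definability of types. The condition $\bar c\indep_{\bar a}\bar b$ is equivalent to this: for every formula $\vp(\bar z,\bar w)$ over $\emptyset$, Morley rank and degree of $\tp(\bar c/\bar a\bar b)$ equal those of $\tp(\bar c/\bar a)$. In $\omega$-stable theories, the map sending a tuple to $(\mathrm{RM},\deg)$ of a formula instance is controlled by complete types. So for fixed $p$, I would write $Indep_p$ as the conjunction of the type predicate $p(\bar x,\bar y)$ with the disjunction over $Q_p$, interpreted in $\Ltype$.

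If $\Ltype$ is read as allowing arbitrary Boolean combinations of type predicates, the proof is just Steps 1 and 2. The rest is checking that the equivalence holds in every $\mc M\sat T$, not only in the monster model. This is immediate because types of tuples in $\mc M$ are computed in any elementary extension.
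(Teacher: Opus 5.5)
Your Steps 1 and 2 are correct, but Step 3 does not deliver what the statement asks for. A quantifier-free $\Ltype$-formula is a \emph{finite} Boolean combination of $\mc L$-atomic formulas and predicates for \emph{complete} types over $\emptyset$. Your two routes fail as follows.
\begin{itemize}
\item The disjunction $\bigvee_{q\in Q_p}q$ is infinitary. It is $\Siin_1$ over $\Ltype$, and written as $p\wedge\bigwedge_{q\supseteq p,\,q\notin Q_p}\neg q$ it is also $\Piin_1$, but it is not quantifier-free.
\item Adding a further predicate changes the language.
\item Proving the set type-definable would not help either. A partial type is not one of the complete-type predicates of $\Ltype$, and your rank-and-degree reformulation again only gives an infinite Boolean combination.
\end{itemize}
A minor point: over a finite $\bar a$, a countable $\omega$-stable theory has only countably many types $r$, not continuum many.

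That said, the obstacle you isolate in Step 2 is genuine, and the paper's proof steps over it. It uses finite multiplicity to get finitely many nonforking extensions of $\tp(\bar b/\bar a)$ and concludes that a finite disjunction of type predicates suffices. But those finitely many candidates are relative to a fixed $\tp(\bar a\bar c)$, which is exactly your point. In fact no quantifier-free formula exists in general.

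Let $\mc L=\{S\}\cup\{P_n,G_n\mid n\in\omega\}$. Let $T$ say that $S,P_0,P_1,\ldots$ are pairwise disjoint and infinite, and that each $G_n$ is the graph of a surjection $P_n\to S$ with infinite fibers. This $T$ is complete, and it is $\omega$-stable since there are countably many types over countable models. Take $\bar x$ empty (or naming one element outside $S\cup\bigcup_nP_n$) and $p=\tp(b)$ with $b\in S$.

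For $c\in P_m$ we have $c\indep b$ iff $\neg G_m(c,b)$. Indeed, if $G_m(c,b)$ then $b\in\operatorname{dcl}(c)\setminus\operatorname{acl}(\emptyset)$. Otherwise $\mathrm{RM}(c/b)=2=\mathrm{RM}(c)$.

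Now take any quantifier-free $\Ltype$-formula. It mentions only finitely many $P_k,G_k$ and finitely many type predicates. Each of those type predicates, if its argument tuple contains $z$, specifies which $P_k$ (if any) contains $z$. Hence for all large $m$ the formula takes the same value at $(b,c)$ and $(b,c')$, where $c,c'\in P_m$ with $G_m(c,b)$ and $\neg G_m(c',b)$. But exactly one of these pairs is independent.

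So the most one can prove is what your Steps 1--2 already give: $Indep_p$ is both $\Siin_1$ and $\Piin_1$ over $\Ltype$. In addition, the paper's finite disjunction is correct once $\tp(\bar a\bar c)$ is also fixed, or ranges over finitely many types. That finite version is all the paper's later applications need, since they only apply independence to independent tuples of realizations of a fixed $p_{\bar m}$ and to $\not\indep$-classes among realizations of one fixed type.
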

\begin{proof}
  The theory is $\omega$-stable, thus has finite multiplicity, so in particular there are only finitely many nonforking extensions of any type to any (finite) set. Thus, independence can be phrased as $\bar{zy}$ realizing any of the finitely many nonforking extensions of $\tp(\bar y/\bar x)$ (which can be found from $p$), which is just a disjunction of finitely many predicates and thus quantifier-free expressible in $\mc L_{type}$.
\end{proof}

In addition, dimension is also expressible in $\Loin$.

\begin{prop}\label{dim}
Fix $l<\omega$. Let $r\in S_n(T)$. Let $p$ be a type over a finite set (of size $n$) realizing $r$. Let $s\in\omega+1$. Then there is an $\Loin(T)$ formula $\psi_{r,p,s}(\bar x)$ such that for any $\mc M\sat T$ and tuple $\bar m$ in $\mc M$ matching the arity of $r$:

$\mc M\sat \psi_{r,p,s}(\bar m)\iff\tp(\bar m)=r\wedge\dim(p_{\bar m},\mc M)=s$.

The complexity of $\psi_{r,p,s}$ is as follows:
    \begin{itemize}
      \item In general, $\psi_{r,p,s}$ can be taken to be $\Piin_2$ over $\mc L_{type}$.
      \item If $s<\omega$, then $\psi_{r,p,s}$ can be taken to be $d\mhyph\exists_1$ over $\mc L_{type}$.
    \end{itemize}
\end{prop}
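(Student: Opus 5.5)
We will express $\dim(p_{\bar m},\mc M)=s$ through independent sequences of realizations, using \autoref{indep} to make independence quantifier-free in $\Ltype$. Write $p$ as a type over a finite set $\bar a$ with $\tp(\bar a)=r$. By $\omega$-stability we may assume $p$ is stationary, or at least that $\dim(p_{\bar m},\mc M)$ is the cardinality of a maximal $\bar m$-independent set of realizations of $p_{\bar m}$ in $\mc M$. For each $k<\omega$ consider the sets of types
\[Q_k=\set{q(\bar x,\bar y_1,\dots,\bar y_k)\in S(T) : q\supseteq r(\bar x)\cup\textstyle\bigcup_i p_{\bar x}(\bar y_i),\ \bar y_1,\dots,\bar y_k \text{ independent over } \bar x}.\]
By the usual finite-character argument, the statement ``$\tp(\bar m\bar b_1\dots\bar b_k)\in Q_k$'' holds exactly when $\tp(\bar m)=r$, each $\bar b_i$ realizes $p_{\bar m}$, and $\bar b_{i+1}\indep_{\bar m}\bar b_1\dots\bar b_i$ for all $i$. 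Using \autoref{indep} (iteratively, or directly by disjunction over the relevant types), this is a quantifier-free $\Ltype$ formula $\theta_k(\bar x,\bar y_1,\dots,\bar y_k)$. The disjunction is over possibly infinitely many type predicates, which is allowed in $\Loin$. Alternatively, we can take the finite disjunction of \autoref{indep} step by step. Either way $\theta_k$ is quantifier-free, i.e.\ $\Siin_0=\Piin_0$, over $\Ltype$.

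Next we translate dimension. We have $\dim(p_{\bar m},\mc M)\ge k$ iff $\mc M\sat\exists\bar y_1\dots\bar y_k\,\theta_k(\bar m,\bar y)$. This uses that independence is witnessed by any maximal independent set, and that all maximal independent sets have the same size (the dimension is well defined). Hence:
\begin{itemize}
\item For $s<\omega$, take $\psi_{r,p,s}$ to be $r(\bar x)\wedge\exists\bar y\,\theta_s\wedge\neg\exists\bar y\,\theta_{s+1}$. This is a conjunction of an $\exists_1$ and a $\forall_1$ formula over $\Ltype$, i.e.\ $d\mhyph\exists_1$.
\item For $s=\omega$, take $r(\bar x)\wedge\bigwedge_k\exists\bar y\,\theta_k$. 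This is $\Piin_2$, indeed a conjunction of $\exists_1$ formulas.
\end{itemize}
The finite case is also $\Piin_2$, which covers the general claim.

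The main obstacle is the equivalence between dimension and the existence of independent $k$-tuples, specifically for nonstationary $p$. In the stationary case it is standard: an independent set of realizations of a stationary type extends to a maximal one, and all maximal ones have the same size. In the nonstationary case we first handle the definition. We would pass to a finite extension $\bar a'\supseteq\bar a$ over which $p$ has a stationary nonforking extension, adjusting $r$ accordingly. Alternatively, we use whatever convention the paper adopts, namely dimension as the size of a maximal independent set of realizations in $\mc M$; under that convention the existential formula still detects a lower bound, provided maximal independent sets have uniform size. A second, minor point is that $\theta_k$ must involve independence over $\bar m$ of $\bar b_{i+1}$ from the whole tuple $\bar b_1\dots\bar b_i$. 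This requires applying \autoref{indep} with $\bar z$ of larger arity and the base type $\tp(\bar m\bar b_{i+1})$, which \autoref{indep} explicitly allows.
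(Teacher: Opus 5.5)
Your proposal is correct and is essentially the paper's proof. You express ``$k$ independent realizations of $p_{\bar x}$'' quantifier-free in $\Ltype$ via \autoref{indep}, use ``$s$ but not $s+1$'' for the $d\mhyph\exists_1$ bound, and take an infinite conjunction for $s=\omega$; the paper writes this last formula as $r(\bar x)\wedge\bigwedge_{s<\omega}\neg\psi_{r,p,s}$, which is equivalent to your $r(\bar x)\wedge\bigwedge_k\exists\bar y\,\theta_k$. Drop the infinite-disjunction option for $\theta_k$: with an infinitary $\theta_k$, the formula $\exists\bar y\,\theta_k$ is $\Siin_1$ rather than finitary $\exists_1$, and that option is unnecessary anyway, because finite multiplicity makes $Q_k$ finite, exactly as in \autoref{indep}.
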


\begin{proof}
  Note that the first claim follows from the second: If $s=\omega$, then we can simply use the formula that says $r(\bar x)\wedge(\bigdoublewedge_{s\in\omega}\neg\psi_{r,p,s})$, which is now $\Piin_2$ over $\mc L_{type}$ (and thus in $\Loin(T)$).

  For the second claim, assume $s\in\omega$. The construction follows immediately from the ``first remark'' in \cite[Proposition 4]{Bouscaren}.\footnote{It should be noted that while in that proposition Bouscaren claims $\psi_{r,p,s}$ can be written in $\mc L_1(T)$ even for $s=\omega$, that uses the assumption of $T$ having countably many models. However, the remark for $s\in\omega$ does not use this assumption.}
  To be more explicit: The first part that says $\bar x$ realizes $r$ is quantifier-free because $r(\bar x)$ is in the language $\mc L_{type}$. The second part just says there are $s$ (but not $s+1$) realizations of $p_{\bar m}$ which are independent. Saying $\bar y$ realizes $p_{\bar x}$ is expressible by $q(\bar y,\bar x)$ where $q$ is the type of $\bar aA$ with $\bar a$ any realization of $p$ and $A$ the base of $p$. (Note again that $q$ is in the language so this is quantifier-free.)
  At last it remains to express a tuple of variable (realizing the same type $p$) being independent, but we can just apply \autoref{indep}. Hence $\psi_{r,p,\omega}$ is $d\mhyph\exists_1$ over $\mc L_{type}$. This establishes the second claim.
\end{proof}
\begin{rmk}
It is crucial in the above propositions that the formulas depend on the types of the free variables. This allows us to know (non-uniformly) the finitely many nonforking types, reducing the complexity of the independence relation down. The story would be different if we were to give formulas uniformly.
\end{rmk}

\subsection{The countable case}

First, consider the case when $T$ has countably many countable models.

\begin{thm}
  Every countable model of $T$ has a Scott sentence that is $\Siin_5$ over $\Ltype$.
\end{thm}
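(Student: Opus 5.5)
We follow Bouscaren's proof of Martin's conjecture for $\omega$-stable theories with few countable models, and track quantifier complexity over $\Ltype$ using \autoref{indep} and \autoref{dim}. Since $T$ has countably many countable models, it is small and $\omega$-stable, and the structure theory of \cite{SHM} applies. Every countable $\mc M\sat T$ is prime and minimal over some finite tuple $\bar a$ together with a finite family of "dimensions". Concretely, $\mc M$ is determined up to isomorphism by three pieces of data:
\begin{itemize}
  \item the type $r=\tp(\bar a)$;
  \item finitely many (strongly) regular types $p_1,\dots,p_k$ over finite sets, realized by subtuples of $\bar a$;
  \item the dimensions $s_i=\dim(p_{i,\bar a},\mc M)\in\omega+1$.
\end{itemize}
From this data $\mc M$ is recovered as the prime model over $\bar a$ together with independent sets of realizations of the $p_i$ of the prescribed sizes. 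I would first isolate this decomposition as a black box, taking it from the argument in \cite[Proposition 4]{Bouscaren}, as the statement to be made definable.

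The Scott sentence will have the shape $\exists\bar x\,\Phi(\bar x)$, where $\Phi(\bar x)$ is the conjunction of three parts.
\begin{itemize}
  \item The atomic $\Ltype$-predicate $r(\bar x)$.
  \item For each $i$, the formula $\psi_{r,p_i,s_i}(\bar x)$ of \autoref{dim}. This is $d\mhyph\exists_1$ when $s_i<\omega$ and $\Piin_2$ when $s_i=\omega$.
  \item A formula $\chi(\bar x)$ saying that $\mc M$ is prime and minimal over $\bar x$ and the chosen independent realizations. That is, every element realizes over them a type that is isolated and that does not produce a new independent realization of any regular type outside the chosen ones.
\end{itemize}
The key point is to express $\chi$ with bounded complexity. "Every finite tuple $\bar b$ has isolated type over $\bar x$ together with a finite independent sequence of realizations" can be written as follows. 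For every $\bar b$, there exists a finite independent sequence $\bar c$ (independence being quantifier-free by \autoref{indep}) such that $\tp(\bar b\bar c\bar x)$ is one of the countably many isolated-over-$\bar c\bar x$ types. This is a countable disjunction of $\Ltype$-predicates. The result is $\forall\exists$ over quantifier-free $\Ltype$-formulas, i.e.\ $\Piin_2$ or $\Piin_3$. The non-orthogonality and domination conditions are handled the same way, by using the finitely many nonforking extensions and the fact that types are atomic in $\Ltype$.

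Counting quantifier alternations, each conjunct of $\Phi$ should be at most $\Piin_4$ over $\Ltype$, so $\exists\bar x\,\Phi$ is $\Siin_5$. Finally we verify the sentence: any countable $\mc N$ satisfying it has a witness tuple with the same invariants, hence is prime over the same configuration, and so $\mc N\cong\mc M$ by uniqueness of prime models over independent sets in $\omega$-stable theories.

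The main obstacle is the step expressing $\chi$, namely that $\mc M$ is exactly the prime model over $\bar a$ and the bases of the dimensions, with the correct infinite dimensions. This is where Bouscaren's original sentence uses $\mc L_1(T)$ with unbounded quantifier depth. I would try to replace the first-order descriptions of isolated types by the atomic $\Ltype$ predicates, so that only the outer $\forall\exists\forall$ pattern contributes alternations.
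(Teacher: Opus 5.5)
\textbf{The gap: the decomposition you start from is false in general.} Everything downstream rests on it. It is not true that every countable model is determined by $\tp(\bar a)$ and the dimensions of finitely many types over subtuples of a single finite tuple $\bar a$. With few countable models the ENI-depth can be $2$, and the decomposition behind \cite[Proposition 5]{Bouscaren} has two levels. Over a fixed tuple $d^0$ there can be infinitely many independent first-level tuples $\bar m$, realizing types from a finite set $R$. Each such $\bar m$ carries its own vector $D(\bar m)\in(\omega+1)^l$ of dimensions of the ENI types $p^i_{\bar m}$. The isomorphism type of $\mc M$ depends on how many independent first-level tuples carry each vector $\bar s$. The set $\set{\bar m\mid r(\bar m)\wedge D(\bar m)=\bar s}$ is not the set of realizations of a complete type over a finite set, so these counts are none of your $s_i$.

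Shelah's example in the paper makes this concrete. A model is determined by its numbers of Sort I, II and III components. The sort of a component is decided by the dimensions of $q_\alpha$ over the classes $\alpha$ in it, which no first-order type over a finite $\bar a$ can see. As a sanity check, your own estimates (dimension clauses $\Piin_2$, $\chi$ at most $\Piin_3$) would give a $\Siin_4(\Ltype)$ Scott sentence for the model with infinitely many components of each sort. The theorem following this one in the paper rules exactly that out.

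\textbf{The missing idea} is to express these second-level counts directly. The paper's sentence does this after the outer $\exists a^0$, with the following clauses.
\begin{itemize}
  \item A $\Piin_2$ clause fixing $\dim(q_{\bar d},\mc M)$ for each $q\in Q$ and every $\bar d$ of type $\tp(d^0)$.
  \item For each $r\in R$, the $\exists_3$ sentences $\theta^r(\bar s,\bar t)$ and the $\forall_3$ negations $\neg\theta^r(\bar s_i,\bar s_j)$, recording which vectors occur on dependent realizations of $r$. Here \autoref{Ddimension} writes $D(\bar m)=\bar s$ as a $\forall_2$ formula, and \autoref{indep} handles dependence.
  \item For each $r$ and $\bar s$, the size of a maximal independent set of realizations of $r$ with $D(\bar m)=\bar s$.
\end{itemize}
This last clause expresses the dimension of a set that is itself only $\forall_2$-definable. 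It is the $\Piin_4$ conjunct, and it is the reason the bound is $\Siin_5$.

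By contrast, the clause $\chi$ that you identify as the main obstacle is not needed. The structure theory already guarantees that these invariants determine a countable model of $T$, so the paper's sentence asserts nothing about primeness. As written, $\chi$ would not work anyway. ``Isolated over $\bar x$ and the chosen independent realizations'' refers to an infinite set that the formula cannot quantify over. The local version you propose (each $\bar b$ is isolated over $\bar x\bar c$ for some finite independent $\bar c$) does not produce a single independent set over which the model is prime.
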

\begin{proof}
   We use essentially the same Scott sentence as in \cite[Proposition 5]{Bouscaren}, but improve on the number of quantifiers required.. First of all, everything that follows depends on a fixed tuple $a^0$ of a given isolated type, which takes an (outermost) existential quantifier to pin down. The Scott sentence has four parts, defined using a given fixed tuple $d^0$ and finite sets of types $P=\set{p^i\mid i<l}$, $Q$, and $R$. Before we give the sentence, though, it is necessary to introduce another piece of notation from \cite{Bouscaren}:
   \begin{defn}
     $D(\bar m)=\bar u$ means the dimension of $p^i_{\bar m}$ is $s_i$ for all~$i$.
   \end{defn}
   The following result improves the na\"ive complexity of the formula $D(\bar m)=\bar s$, which is a priori $\Piin_2$ over $\Ltype$ by \autoref{dim}.
   \begin{lem}\label{Ddimension}
     If $\tp(\bar m)=r$ is fixed, then $D(\bar m)=\bar s$ can be expressed as a formula (with $\bar m$ as the variables) that is $\forall_2$ over $\Ltype$ (in particular in $\mc L_1(T)$).
   \end{lem}
   \begin{proof}
     We use a related but slightly different method as in the proof of \cite[Proposition 4]{Bouscaren}. Given tuples $\bar u,\bar t\in(\omega+1)^l$, say $\bar u\sim_r\bar t$ if $D(\bar m)=\bar u, D(\bar n)=\bar t$ for some tuples $\bar m\not\indep\bar n$. \cite[Proposition 2 and Lemma 6]{Bouscaren} shows that $\bar u\sim_r\bar t$ implies $\bar u$ and $\bar t$ have the same number of infinite coordinates, and \cite[Proposition 3]{Bouscaren} shows that there are only finitely many $\sim_r$-equivalence classes.

     Define $\bar u\lesssim\bar t$ as the following: for all $i$ such that $(\bar t)_i<\omega$, $(\bar u)_i=(\bar t)_i$. Note that if $\bar u\lesssim\bar t$, then $\bar u$ has at most the same number of infinite coordinates than that of $\bar t$. By \autoref{dim}, $D(\bar m)\lesssim\bar t$ is $d\mhyph\exists_1$ over $\mc L_{type}$.

     Now suppose $\bar s$ has $k$ infinite coordinates, and let $C_0,\ldots,C_N$ be all $\sim_r$-classes with fewer than $k$ infinite coordinates. Choose representatives $\bar n_i\in C_i$ with dimension $\bar t^i$, for all $i<N$. Now we claim the following formula expresses $D(\bar m)=\bar s$:
        \begin{itemize}
          \item $D(\bar m)\lesssim\bar s$; and
          \item ($\forall \bar n\not\indep\bar m)\ D(\bar n)\not\lesssim\bar t_i$, for all $i<N$.
        \end{itemize}
     This formula is $\forall_2$ over $\Ltype$ by \autoref{dim} and \autoref{indep}. Clearly, if $D(\bar m)=\bar s$ then the formula is satisfied. Conversely, if the formula is true of $\bar m$ then $D(\bar m)$ cannot be in any of the classes $C_i$ (by the second part), so it has at least $k$ infinite coordinates. But the first part shows $D(\bar m)$ has at most $k$ infinite coordinates, and all the finite ones agree with $\bar s$. Hence $D(\bar m)=\bar s$.
   \end{proof}

  Now we are finally ready to begin describing the Scott sentence.
  \begin{itemize}
    \item The first part $\Phi_M$ describes the dimension of $q_{\bar d}$ in $\mc M$, where $q\in Q$ and $\bar d$ is any tuple of $\mc M$ with type $\tp(d^0)$.

        The complexity of this part is $\Piin_2$ over $\Ltype$ by \autoref{dim}.
    \item The second part is a finite conjunction over $r\in R$ of sentences of the form $\theta^r(\bar s,\bar t)$, which says: There exists $\bar v,\bar w$ both with type $r$ that are dependent, such that $D(\bar v)=\bar s, D(\bar w)=\bar t$.

        The complexity of this part is $\exists_3$ over $\Ltype$ by \autoref{indep} and \autoref{Ddimension}.
    \item The third part is a finite conjunction over $r\in R$ of sentences of the form $\neg\theta^r(\bar s_i,\bar s_j)$.

        The complexity of this part is $\forall_3$ over $\Ltype$ by the same calculation as in the previous part.
    \item The last part is a finite conjunction over $r\in R$ of sentences expressing the cardinality of the set $r_{\bar s}(\mc M)$, which is a maximal independent set in $\mc M$ of tuples $\bar m$ realizing $r$ with $D(\bar m)=\bar s$.

        Saying $\bar m$ is a tuple realizing $r$ with $D(\bar m)=\bar s$ is $\forall_2$ over $\Ltype$, so expressing the dimension of $r_{\bar s}(\mc M)$ is $\Piin_4$ over $\Ltype$ (by a computation similar to \autoref{dim}).
  \end{itemize}
  Combining everything, we get a $\Siin_5$ over $\Ltype$ Scott sentence.
\end{proof}

We can further reduce the complexity with a bit of extra work.
\begin{cor}
  Every countable model of $T$ has a Scott sentence that is $\operatorname{d-\Siin_4}$ over $\Ltype$.
\end{cor}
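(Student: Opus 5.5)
The plan is to revisit the four parts of the $\Siin_5$ Scott sentence and see where the outermost $\exists$ and the $\Piin_4$ part can be absorbed. The aim is to write the whole sentence as $\exists \bar a\,(\Phi\wedge\Psi)$, where $\Phi$ is $\Siin_4$ and $\Psi$ is $\Piin_4$ over $\Ltype$; equivalently, as a conjunction of a $\Siin_4$ sentence and a $\Piin_4$ sentence. The key observation is that the existential quantifier pinning down $a^0$ is unnecessary. The type $\tp(a^0)$ is isolated, so every model contains a tuple of that type. Moreover, since all the data $d^0,P,Q,R$ depend only on types over $a^0$, any two tuples realizing $\tp(a^0)$ are interchangeable. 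I therefore intend to replace $\exists\bar a\,(\tp(\bar a)=\tp(a^0)\wedge\chi(\bar a))$ by $\forall\bar a\,(\tp(\bar a)=\tp(a^0)\to\chi(\bar a))$. This is legitimate provided $\chi(\bar a)$ holds in $\mc M$ for one realization of $\tp(a^0)$ iff it holds for all. That will follow from Bouscaren's setup if, as I expect, $\chi$ expresses the invariants of $\mc M$ computed relative to that type. If it does not, the fallback is to keep the existential quantifier only in front of the $\Siin$ part.

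Next I split the four parts by complexity. The first part $\Phi_M$ is $\Piin_2$. The third part is $\forall_3$. The fourth part, the dimension of $r_{\bar s}(\mc M)$, is the only piece at level $4$. Following the proof of \autoref{dim}, it should be written as ``there exist $k$ independent realizations'' together with ``there do not exist $k+1$ independent realizations,'' or as an infinite conjunction of the former when the dimension is $\omega$. Here a realization means a tuple $\bar m$ realizing $r$ with $D(\bar m)=\bar s$, which is $\forall_2$ by \autoref{Ddimension}. The ``exist $k$'' clauses are then $\exists_3$, and the ``not exist $k+1$'' clauses are $\forall_3$ applied to the negation of a $\forall_2$ formula, i.e.\ to an $\exists_2$ formula, so these are $\Piin_4$ at worst. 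The second part, $\theta^r(\bar s,\bar t)$, is $\exists_3$.

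Collecting these, the existential pieces (the second part and the ``at least $k$'' clauses) are all $\Siin_4$, or in fact $\exists_3$. The universal pieces (the first and third parts and the ``at most $k$'' and infinite-dimension clauses) are all $\Piin_4$. Once the $a^0$ quantifier has been made universal, its variables can be pulled into each conjunct separately: it is absorbed into the $\Piin_4$ half directly, and into the $\exists$ half as $\exists\bar a$, which is legitimate because the type is realized and the condition does not depend on the choice of realization. The result is a conjunction of a $\Siin_4$ and a $\Piin_4$ sentence, i.e.\ $\operatorname{d-\Siin_4}$ over $\Ltype$.

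The main obstacle is the invariance claim in the first step: that the truth of each clause does not depend on which realization of $\tp(a^0)$ (and of $d^0$) is chosen. I would first try to derive it from the fact that an elementary permutation is not available in general. Instead, the clauses only mention dimensions and dependence patterns of types over $\bar a$, and these are determined by the isomorphism type of $\mc M$ together with $\tp(\bar a)$ only if $\mc M$ is homogeneous over such tuples. If that fails, I would instead keep $\exists\bar a$ and check that the ``at most'' clauses can still be placed inside a $\Piin_4$ matrix after a single outer existential. This would require a sharper count, showing that $D(\bar m)=\bar s$ is in fact $\forall_1$ relative to the type, so that the fourth part drops to $\Piin_3$.
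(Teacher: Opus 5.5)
There is a genuine gap, and it is the step you flag yourself. Turning $\exists\bar a$ into $\forall\bar a$ requires that $\chi(\bar a)$ hold in $\mc M$ at \emph{every} realization of $\tp(a^0)$, and nothing you cite gives this. That $d^0,P,Q,R$ are determined by $\tp(a^0)$ fixes the \emph{formula} $\chi$. It does not fix its truth value at a given realization, which depends on how that realization sits inside $\mc M$. Transferring it between realizations would need an automorphism of $\mc M$, and countable models of $\omega$-stable theories with countably many countable models need not be homogeneous. A toy example: $E$ has exactly two classes, each containing exactly one point of each of the pairwise disjoint predicates $P_n$, and the two classes have $0$ and $5$ further points respectively. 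The two $P_0$-points realize the same isolated type but are not automorphic, and ``the class of $a$ has no further points'' holds at one and fails at the other. The paper says explicitly that the outer existential is there to pin $a^0$ down, so you cannot assume invariance. Your fallback is not viable either. If $D(\bar m)=\bar s$ were $\forall_1$ relative to the type, the whole matrix would be $\Piin_3$, and $\exists\bar a$ applied to it would be a $\Siin_4$ Scott sentence for every countable model. The theorem immediately following this corollary (Shelah's ENI-depth-$2$ example) rules that out.

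The missing idea is to stop dissecting the sentence and use the hypothesis that $T$ has only countably many countable models. Conjoining the negations of the $\Siin_5$ Scott sentences of all the other countable models gives a Scott sentence for $\mc M$ that is $\Piin_5$ over $\Ltype$. Now $\mc M$ has both a $\Siin_5$ and a $\Piin_5$ Scott sentence. A.~Miller's theorem, relativized to $\Ltype$ (over the infinitary theory saying the type predicates behave like types), then yields a $\operatorname{d-\Siin_4}$ Scott sentence. This argument never needs to know which realization of $\tp(a^0)$ is chosen.
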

\begin{proof}
  First, every such model has a Scott sentence that is $\Piin_5$ over $\Ltype$, by conjuncting over the negations of the $\Siin_5$ Scott sentences of all other (countably many) models of $T$. Now we are done by relativizing A. Miller's theorem \cite{AMiller}, which says any countable structure with both a $\Piin_{n+1}$ Scott sentence and a $\Siin_{n+1}$ Scott sentence has a $\operatorname{d-\Siin_n}$ Scott sentence, over $\Ltype$ (and an infinitary theory that says the type symbols in $\Ltype$ behave like types).
\end{proof}

We come close to showing the optimality of this result by showing that $\operatorname{d-\exists_4}$ cannot be replaced by $\operatorname{\exists_4}$.

\begin{thm}
  There exists an $\omega$-stable $T$ with countably many countable models and an $\mc M\sat T$, such that $\mc M$ does not have a Scott sentence that is $\Siin_4$ over $\Ltype$.
\end{thm}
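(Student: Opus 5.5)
We will exhibit a concrete example and prove a lower bound by a back-and-forth/pumping argument relative to $\Ltype$. The natural candidate is the theory $T$ of a single equivalence relation $E$ with infinitely many infinite classes. More flexibly, we can take $T$ to be the theory of an equivalence relation with infinitely many classes, where for each $n\ge1$ there is exactly one class of size $n$, and we allow any number of infinite classes. The second theory is $\omega$-stable and has countably many countable models: $\mc M_k$ with $k\le\omega$ infinite classes. It eliminates quantifiers after adding the unary predicates ``the class of $x$ has size $n$'', which are $\mathrm{d}\mhyph\exists_1$. So every complete type is isolated modulo the finitely many relevant definable facts, and the $\Ltype$-predicates contribute only boundedly many quantifiers of information. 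The plan is to pick a model such as $\mc M_1$, with exactly one infinite class. Its natural Scott sentence says ``there is an infinite class, every element lies in a finite class or in that class''. This naturally has the shape $\exists x\,[\text{class of }x\text{ infinite}] \wedge \forall y\,\forall z\,(\ldots)$, a $\mathrm{d}\mhyph\Siin_4$-type sentence once we count ``infinite class'' as $\Piin_2$ and ``$y$ in a finite class'' as $\Siin_2$ over $\Ltype$. If this example does not calibrate to level 4 exactly, we will adjust it by replacing classes with more complicated components: for instance, an equivalence relation whose classes carry a second equivalence relation, shifting everything up by two levels. We choose the construction so that the number of infinite ``top'' components is the invariant and recognizing a top component is $\Piin_3$ over $\Ltype$.

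The lower bound proceeds via the standard characterization of $\Siin_n$ Scott sentences. By Montalb\'an's theorem, relativized to $\Ltype$ exactly as the paper relativizes A.~Miller's theorem in the preceding corollary, $\mc M$ has a $\Siin_{n+2}$ Scott sentence iff some finite tuple $\bar a$ has a $\Piin_{n+1}$-type over $\Ltype$ that is $\Piin_{n+1}$-supported, i.e.\ the orbit of $\bar a$ is $\Piin_{n+1}$-definable. Equivalently, we must rule this out by showing the following. For every tuple $\bar a\in\mc M$ there is a non-isomorphic countable $\mc N\sat T$ and $\bar b\in\mc N$ with $(\mc M,\bar a)\le_{3}(\mc N,\bar b)$ in the $\Ltype$-back-and-forth relations. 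This is enough, since any $\Siin_4$ sentence true in $\mc M$ is witnessed by some $\bar a$ satisfying a $\Piin_3$ formula, which then transfers to $\mc N$. For our example we take $\mc M=\mc M_1$ and $\mc N=\mc M_2$ (or $\mc M_\omega$). We then verify $(\mc M_1,\bar a)\le_3(\mc M_2,\bar b)$, where $\bar b$ is a copy of $\bar a$ placed in the first infinite class. We compute the back-and-forth relations level by level for these equivalence-relation structures, treating the type predicates as atomic. Quantifier elimination reduces each $\Ltype$-atomic fact to finitely many facts about class sizes and equalities, so they are determined by low-level information.

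The main obstacle is this level-by-level computation with the $\Ltype$ predicates present. Adding all complete types as atoms can collapse complexity, since the type of a tuple records, e.g., whether $x$ lies in an infinite class. That information is $\Piin_2$ in $\mc L$ but atomic in $\Ltype$. For this reason the plain one-equivalence-relation example almost certainly fails (its Scott sentence becomes too simple), and the nested construction is needed. In it, ``being a top component with infinitely many infinite subcomponents'' is not captured by any complete first-order type, because all top components realize the same types. It remains genuinely $\Piin_3$ over $\Ltype$, while the count of such components is what distinguishes the models. We will first determine the $\Ltype$-atomic diagram of tuples via quantifier elimination. Then we will prove by induction the back-and-forth relations $\le_1,\le_2,\le_3$ between top components of different kinds. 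The conclusion follows by assembling these componentwise relations into the relation between $(\mc M,\bar a)$ and $(\mc N,\bar b)$.
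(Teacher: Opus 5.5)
\textbf{The proposal has a genuine gap: it never produces the example, and the example is where the difficulty of this theorem lies.}

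The one-level equivalence relation fails, as you note. The nested replacement is described only by the properties you want it to have (``we choose the construction so that\ldots''), and those properties are hard to obtain.

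\begin{itemize}
\item \emph{Model count.} Suppose $E_2\subseteq E_1$ and an $E_1$-class may contain any number $k\le\omega$ of infinite $E_2$-subclasses. Then the multiset of these $k$'s is an isomorphism invariant, so $T$ has $2^{\aleph_0}$ countable models. To cut the possible kinds of components down to finitely many, you need first-order axioms that propagate dimensions through infinite, non-definable components. That is exactly what the links $G$ in Shelah's ENI-depth-2 example do, producing the three sorts $\{0\}$, $\omega$, $\{\omega\}$; this is the theory the paper uses.
\item \emph{Complexity of ``special''.} Your claim that, in the nested structure, ``having infinitely many infinite subcomponents'' is genuinely $\Piin_3$ over $\Ltype$ is false. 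Whether $[y]_{E_2}$ is infinite is recorded in $\tp(y)$. Hence ``$[x]_{E_1}$ has infinitely many infinite $E_2$-subclasses'' is $\Piin_2$ over $\Ltype$, which is the same collapse that killed the one-level example. Hiding that information from the types by adding yet another level of non-isolated dimensions generically lands you in the ENI-depth${}>2$ configuration (failure of condition (3) of the Shelah--Harrington--Makkai characterization), which again forces $2^{\aleph_0}$ models.
\end{itemize}

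\textbf{The lower-bound strategy is legitimate in principle but is not carried out, and your choice of models targets the wrong phenomenon.} The paper relativizes to an oracle $X$ computing the putative Scott sentence. It then reduces the $\Pi^0_4(X)$-complete set of $X$-computable equivalence structures with infinitely many infinite classes to the index set of $\mc M$. You instead show that for every $\bar a$ there are $\mc N\not\cong\mc M$ and $\bar b$ with $(\mc M,\bar a)\le_3(\mc N,\bar b)$; this does rule out a $\Siin_4$ Scott sentence.

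Your paraphrase of Montalb\'an's theorem is off. The correct form is that $\mc M$ has a $\Siin_{n+2}$ Scott sentence iff $(\mc M,\bar a)$ has a $\Piin_{n+1}$ Scott sentence for some $\bar a$; definability of the orbit of $\bar a$ alone is trivial for $\bar a=\emptyset$. Your direct transfer argument does not need the theorem, so this is harmless.

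The comparison $\mc M_1$ versus $\mc M_2$ fails once the complexities are computed correctly. With ``special'' being $\Piin_2$ in a parameter from the component, the statement ``every top component other than the one containing $\bar a$ is non-special'' is $\Piin_3$. It is true in $\mc M_1$ when $\bar a$ meets the special component and false in $\mc M_2$, so $(\mc M_1,\bar a)\le_3(\mc M_2,\bar b)$ fails for such $\bar a$.

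In the paper, the model without a $\Siin_4(\Ltype)$ Scott sentence is the one in which every sort occurs infinitely often. Its $\Piin_4$ Scott sentence says exactly that, and the obstruction is that ``infinitely many Sort III components'' is $\Pi^0_4$-hard. This is witnessed by building components $C_y$ in which the dimension of $q_{\alpha_y}$ tracks the size of the $y$th class of $\mc R_x$. A back-and-forth version of the proof would have to compare that model with models having only finitely many Sort III components, not one special component against two.
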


\begin{proof}
  We take $T$ to be Shelah's theory with ENI-depth 2 but countably many models (see for example \cite[Chapter XVIII.4]{Baldwin}). We provide a relatively self-contained description of the theory here (following terminologies of \cite{Baldwin}), but those who are less detail-oriented should feel free to skip to the next paragraph. The (relational) language is $\set{E,R,G}\cup\set{P_i\mid i\in\omega}$, where each $P_i$ is unary, $E,R$ are binary, and $G$ is 5-ary. The first part of $T$ says $E$ is an equivalence relation, $R$ defines a directed acyclic graph on equivalence classes of $E$ (i.e. $R$ is $E$-invariant without loops), $G$ is symmetric in the second and third coordinates, and the $P_i$'s are pairwise disjoint and each intersects every $E$-class infinitely. Next, we say an element $a$ has \emph{order} $i$ if $P_i(a)$, or $\omega$ if it satisfies no $P_i$. The second part of the theory says for any $E$-classes $\alpha,\beta$ with $\alpha R \beta$, there exist unique $a\in\alpha, \set{b,c}\sbeq\beta$, such that $G(a,b,c,\cdot,\cdot)$ (equivalently $G(a,c,b,\cdot,\cdot)$) is nonempty; and whenever this is nonempty, it defines a \emph{proper link}, i.e. a bijection $\gamma_{\alpha,\beta}$ from $\alpha-\set a$ to $\beta-\set{b,c}$, interacting with order as follows: (1) If $a$ has infinite order then so do $b$ and $c$; (2) If $a$ has order $i<\omega$ then so does either $b$ or $c$; (3) If there is an $x$ with a different order than that of $\gamma_{\alpha,\beta}(x)$, then it is unique, with the order of $\gamma_{\alpha,\beta}(x)$ the same as that of $a$, and in this case the orders of $b,c$ needs to be different. The third and last part of the theory says for every $E$-class $\alpha$ and each $a\in\alpha$, there exist infinitely many $\beta$ properly linked to $\alpha$ via $a$ and some $\set{b,c}\sbeq\beta$; if $a$ has order $i<\omega$, then infinitely such $\beta$ exist with orders of $b,c$ equal to each other, and for each $d\in\alpha$ with order $j>i$ infinitely such $\beta$ exist with order of $b,c$ different and $\gamma_{\alpha,\beta}(d)$ having order $i$; Symmetrically, for every $E$-class $\beta$ and each $\set{b,c}\sbeq\beta$, there exist infinitely many $\alpha$ properly linked to $\beta$ via $\set{b,c}$ and some $a\in\alpha$; if $b$ has order $i<\omega$, then for each $c\in\beta$ with order at least $i$, and $e\in\beta$ with order $i$, neither equal to $b$, infinitely such $\alpha$ exist with orders of $a$ equal to $i$, and infinitely such $\beta$ exist with order of $a$ being $i$ and order of $\gamma\inv_{\alpha,\beta}(e)$ has order not equal to $i$. This concludes the description of the theory.

  The key takeaway of this theory is that each $\mc M\sat T$ consists of ``connected components'' (of $R$), where each ``node'' is actually an equivalence class (of $E$), and each class $\alpha$ is characterized by the dimension of a certain type $q_\alpha$ (which describes an element of order $\omega$). Additionally, for each component $C$, the set of dimensions of $q_\alpha$ over $\alpha\in C$ can only be $\set 0$, $\omega$, or $\set{\omega}$, which we name as Sort I, II, III components, respectively. The number of components of each sort determines a model of $T$ completely. Now, let $\mc M$ be the model where all three types of components appear infinitely often. It has a Scott sentence that is $\Piin_4$ over $\Ltype$: just say each type of component appears infinitely many often.

  Now we show $\mc M$ does not have a $\Siin_4$ over $\Ltype$ Scott sentence. To do this, we work in the (infinitary) theory where all finitary quantifiers are eliminated (by Morleyization) and all types are added to the language (requiring type predicates behave as expected), and then show $\mc M$ does not have a $\Siin_4$ Scott sentence in this language. The construction is as follows.

  Suppose towards a contradiction that $\mc M$ has a $\Siin_4$ Scott sentence in this language. Take an oracle $X$ computing the Scott sentence, and let $\set{\mc N_i}_{i\in\omega}$ be an $X$-computable enumeration of all $X$-computable structures in the same language, and let $\set{\mc R_i}_{i\in\omega}$ be an $X$-computable enumeration of all $X$-computable equivalence structures. Consider the set $A^{X}=\set{x\mid\mc R_x\text{ has infinitely many infinite classes}}$, which is $\Pi^0_4(X)$-complete (see \cite[Proof of Theorem 4.6]{bfrsm}). We will show $\set{i\mid\mc N_i\cong\mc M}$ is $\Pi^0_4(X)$-hard (among models of $T$), contradicting the existence of an $X$-computable $\Siin_4$ Scott sentence. We do this by reducing $A^X$ to the given set, as follows: Let $f$ be the computable function such that $\mc N_{f(i)}$ is the following structure: We put in infinitely many Sort I,II components; Then for each $y$, we put a new component $C_y$, generated by a class $\alpha_y$. Whenever we see the $y$-th component of $\mc R_x$ gain a new element, we put in a new realization of $q_{\alpha_y}$ (and change all other classes in $C_y$ correspondingly). It is clear that we always build a model of $T$, and $\mc N_{f(i)}\cong\mc M$ if and only if $i\in A^X$.
\end{proof}

\subsection{The use of parameters}
From now on, we will assume $T$ has uncountably many models. In the later subsections we will be using formulas with a fixed but omitted parameter extensively, which requires justification. Usually, such arguments for Vaught's conjecture (for example in \cite{SHM}) exhibits uncountably many models for $\Th(\mc M,a)$ (where $a$ is the parameter), but we need to keep track of the distinguishing formulas in addition. In this subsection, we show that omitting a fixed parameter does not cause more problems than adding an additional existential quantifier to our distinguishing formulas.

\begin{notn}
In this subsection only, we will abuse notation slightly and use $\mc L_1^{\text{in}}(T)$ to denote the set of $\Piin_{<\omega}$-\emph{formulas}, not just sentences, over $\Ltype(T)$.
\end{notn}

\begin{conj}
For a complete first order theory $T$ with uncountably many models, say the \emph{Type $\omega$-Vaught's conjecture with parameters} for $T$ holds if there exists some $n\in\omega$ such that there are $2^\ctb$ countable models $\set{\mc M_i}_{i\in I}$ and witnessing formulas $\set{\chi_i}_{i\in I}$, with each $\chi_i$ an $\Loin(T)$-formula, so that there is a way of assigning parameters $a_i$ to $\mc M_i$ that makes (for all $i,j\in I$) \[\mc M_i\sat\chi_j(\bar a_i)\iff j=i\]
and, additionally, the formulas $\chi_i$ has to be mutually exclusive, i.e. $T\sat\forall\bar x\neg(\chi_i(\bar x)\wedge\chi_j(\bar x))$ for all $i\ne j$.
\end{conj}
In other words, for each model in the list we can find one single formula that distinguishes it from every other one in the list, up to using a (possibly not first-order) way to choose the right parameters.

\begin{prop}
If $T$ has uncountably many models, then the Type $\omega$-Vaught's conjecture with parameters for $T$ implies the Type $\omega$-Vaught's conjecture for $T$.
\end{prop}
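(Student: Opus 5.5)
Given the models $\set{\mc M_i}_{i\in I}$, formulas $\set{\chi_i}_{i\in I}$ and parameters $\bar a_i$ witnessing the Type $\omega$-Vaught's conjecture with parameters, the natural approach is to existentially quantify out the parameter. For each $i\in I$ put $\sigma_i=\exists\bar x\,\chi_i(\bar x)$. Since $\chi_i$ is a $\Piin_n$-formula over $\Ltype$ for some finite $n$, $\sigma_i$ is $\Siin_{n+1}$ over $\Ltype$, so it lies in $\Loin(T)$ (by the remark that $\Piin_{<\omega}$ absorbs one extra finite level of quantification). By construction $\mc M_i\sat\sigma_i$, witnessed by $\bar a_i$. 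The difficulty is that $\mc M_j$ might satisfy $\sigma_i$ via some tuple other than $\bar a_j$. So $\sigma_i$ does not distinguish $\mc M_i$ from $\mc M_j$ outright, and the proof must extract a subfamily of size $2^\ctb$ on which it does.

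The plan is a counting argument using that each model is countable. Fix $j$. Mutual exclusivity gives $T\sat\forall\bar x\,\neg(\chi_i(\bar x)\wedge\chi_k(\bar x))$ for $i\ne k$, so each tuple $\bar b\in\mc M_j$ satisfies $\chi_i$ for at most one $i$. Since $\mc M_j$ has only countably many tuples, the set $S_j=\set{i\in I\mid \mc M_j\sat\sigma_i}$ is countable. Form the graph on $I$ joining $i\ne j$ whenever $i\in S_j$ or $j\in S_i$. Each vertex then has countably many out-neighbours under $j\mapsto S_j$.

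Next, choose $J\sbeq I$ of size $2^\ctb$ such that $i\notin S_j$ for all distinct $i,j\in J$. This is a free-set argument for a set mapping $j\mapsto S_j\setminus\set j$ of countable sets on a set of size $2^\ctb$. By the Hajnal free set theorem (or Fodor-type results for set mappings of small order), when $|I|=2^\ctb>\ctb$ there is a free set of full size $2^\ctb$. I expect this step to be the main obstacle. If one wants to avoid quoting Hajnal, one could instead try a direct transfinite construction of length $2^\ctb$. At each stage the countably many $S_j$ for already chosen $j$ need avoiding, and one must also avoid the $i$ with $j\in S_i$. A naive version handles only $|J|=\aleph_1$, which suffices under CH or for the conjecture in the ``$\ctb_1$ many'' reading. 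So I would cite Hajnal's theorem for the full $2^\ctb$.

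For $i\ne j$ in $J$ we then have $\mc M_i\sat\sigma_i$ and $\mc M_j\not\sat\sigma_i$, so the $\mc M_i$ with $i\in J$ are pairwise not $\Loin(T)$-equivalent. This gives the second clause of \autoref{twVC}. The quantifier level increases by only one, matching the remark that parameters cost an extra existential quantifier.
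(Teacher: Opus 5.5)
Your reduction to the sentences $\sigma_i=\exists\bar x\,\chi_i(\bar x)$ is correct, and so is the countability of each $S_j$ via mutual exclusivity. The gap is in the free-set step.

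Hajnal's theorem gives a free set of size $\kappa$ for a set mapping on a set of size $\kappa$ whose values have size $<\lambda$ for some $\lambda<\kappa$. Your values are countable, so $\lambda=\aleph_1$, and you need $2^{\aleph_0}>\aleph_1$, not merely $2^{\aleph_0}>\aleph_0$. Under CH the conclusion genuinely fails for countable-valued set mappings: on $\omega_1$, the mapping $F(\alpha)=\alpha$ has no free set with even two elements.

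Your fallback, that a naive recursion yields a fully free set of size $\aleph_1$ and so covers CH, fails for the same reason. A recursion can keep the new index out of the countably many $S_j$ already chosen. It cannot enforce the reverse condition $j\notin S_i$, because the set of $i$ with $j\in S_i$ may be co-countable. Your argument uses nothing about the $S_j$ beyond their countability, so as written it proves the proposition only when CH fails.

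The missing observation is that full freeness is unnecessary. $\mc M_i$ and $\mc M_j$ are $\Loin(T)$-inequivalent as soon as \emph{one} sentence holds in one and fails in the other. So build $\set{i_\alpha}_{\alpha<2^{\aleph_0}}$ by recursion with $i_\alpha\notin\bigcup_{\beta<\alpha}S_{i_\beta}$.

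This union has size at most $|\alpha|+\aleph_0<2^{\aleph_0}$, so a choice always exists. Also $i_\alpha\neq i_\beta$ automatically, because $i_\beta\in S_{i_\beta}$. Then for $\beta<\alpha$ we have $\mc M_{i_\alpha}\sat\sigma_{i_\alpha}$ and $\mc M_{i_\beta}\not\sat\sigma_{i_\alpha}$.

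This one-sided construction is exactly the paper's proof. It works in ZFC and needs no free-set theorem. The rest of your proposal carries over unchanged, including the complexity count for $\sigma_i$.
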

\begin{proof}
  Start by choosing $\set{\mc M_i}_{i\in I}, \set{\chi_i}_{i\in I}$ witnessing the Type $\omega$-Vaught's conjecture with parameters for $T$. We will choose a subset (with the same cardinality) of models that witnesses the Type $\omega$-Vaught's conjecture for $T$.

  Write $\mf c$ for $2^\ctb$ as ordinal. Inductively we build sequences $\set{\mc N_\alpha}_{\alpha<\mf c}$, $\set{\vp_\alpha}_{\alpha<\mf c}$ witnessing the Type $\omega$-Vaught's conjecture for $T$, where each $\mc N_\alpha$ is some $\mc M_i$, and every pair $\mc N_\alpha$ and $\mc N_\beta$ (say $\beta<\alpha$) are distinguished by $\exists\bar x\vp_\alpha(\bar x)$. To this end, in the construction we will also need to maintain the following induction hypothesis: for all $\beta<\alpha$ and any tuple $\bar a$ from $\mc N_\beta$, $\mc N_\beta\not\sat\vp_\alpha(\bar a)$; and $\mc N_\alpha\sat\exists\bar x\vp_\alpha(\bar x)$.

  We induct on $\alpha$, starting with two empty sequences (which clearly satisfies the induction hypothesis). Now given the induction hypothesis holds for all $\beta<\alpha$, we choose $\mc N_\alpha, \vp_\alpha$ in the following way. There are only at most $|\alpha|+\ctb$ ways to choose a pair $(\mc N_\beta,\bar a)$, where $\bar a$ is a tuple from $\mc N_\beta$. Since the $\chi_i$'s are mutually exclusive, there are at most $|\alpha|+\ctb$ many $i$'s so that $\mc N_\beta\sat\chi_i(\bar a)$. As $\mf c>|\alpha|+\ctb$, we can choose some $i\in I$ that does not satisfy the previous condition. Now take $\mc N_\alpha=\mc M_i$ and $\vp_\alpha=\chi_i$. The induction hypothesis is maintained due to the way we chose $i$. Now any pair $\mc N_\alpha, \mc N_\beta$ are distinguished by the sentence $\exists\bar x\vp_{\max(\alpha,\beta)}(\bar x)$.
\end{proof}

\begin{rmk}\label{paramConvention}
Due to the above proposition, in the following subsections we will be freely using formulas with parameters as distinguishing formulas, and it will be easy to check that the formulas involved satisfy the assumption of this proposition.

Also as a result, we are adding an existential quantifier to the distinguishing formulas. Thus in the statement of \autoref{main}, the models are pairwise $\Siin_6(\Ltype)$-inequivalent even though the analysis below will use $\Piin_5(\Ltype)$ formulas only.
\end{rmk}

\subsection{Shelah-Harrington-Makkai's analysis}
Shelah-Harrington-Makkai's analysis gives necessary and sufficient conditions for $T$ to have countably many countable models as follows. Since we are assuming $T$ has uncountably many countable models, one of the four conditions below must fail.

\begin{thm}[{\cite[Theorem 3.1]{SHM}}]\label{ctbconds}
For any complete $\omega$-stable theory $T$, it has countably many countable models if and only if all of the following conditions hold:
\begin{enumerate}[label=(\arabic*), ref=\thethm(\arabic*)]
  \item\label{ctbconds:fincls} (\cite[Proposition 2.9]{SHM}) There are only finitely many equivalence classes of supportive types under the equivalence relation $\not\perp$.
  \item\label{ctbconds:fineni} (\cite[Proposition 2.10]{SHM}) There are only finitely many equivalence classes of ENI types $\not\perp$  to any fixed finite set $A$ under the equivalence relation $\not\perp$.
  \item\label{ctbconds:depth} (\cite[Proposition 2.4]{SHM}) For any ENI type $p$ and any $C/A$ stationary with weight 1, $p\not\perp C, p\perp A$, we have $C/A\not\perp\emptyset$.
  \item\label{ctbconds:ndop} (\cite[Proposition 2.3]{SHM}) $T$ has ENI-NDOP.
\end{enumerate}
\end{thm}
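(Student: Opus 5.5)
The plan follows \cite{SHM}: prove the two directions separately, and get necessity from four separate constructions of $2^\ctb$ pairwise non-isomorphic countable models, one for each failing condition. Throughout we use the standard facts for $\omega$-stable $T$: prime models over arbitrary sets exist and are unique, weight is finite, and every non-algebraic type is non-orthogonal to a regular type of weight $1$.

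For \emph{necessity}, assume one condition fails and build continuum many countable models.
\begin{enumerate}
  \item If (1) fails, there are infinitely many pairwise orthogonal supportive types $p_k$, which we may take to be regular and over finite sets, so by compactness over a common finite set $A$. Here supportive types are exactly those whose dimension can be varied independently in countable models. For each $X\subseteq\omega$, take the prime model over $A$ together with an infinite independent set of realizations of $p_k$ for $k\in X$ only. Orthogonality makes $\dim(p_k)$ infinite or zero according to whether $k\in X$, so these models are pairwise non-isomorphic. Because $A$ is a parameter, we count models of $\Th(\mc M,A)$, and the usual fact that naming finitely many constants does not change ``countably many countable models'' transfers the conclusion back to $T$.
  \item If (2) fails, do the same with the infinitely many non-orthogonality classes of ENI types over a fixed finite $A$. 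Since these types are eventually non-isolated, we can realize their dimensions freely, infinite or zero according to $X$, over the prime model of $A$.
  \item If (3) fails, we have an ENI type $p$ and a weight-one $C/A$ with $p\not\perp C$, $p\perp A$, and $C/A\perp\emptyset$. Orthogonality to $\emptyset$ lets us take infinitely many independent copies $C_n$ of $C$ over $A$. Over each $C_n$ the $p$-dimension can be chosen to be either $0$ or $\omega$, independently in $n$. This gives $2^\ctb$ models, told apart by which $C_n$ carry realizations of $p$; this is the ``depth'' construction.
  \item If (4) fails (ENI-DOP), we have independent $M_1,M_2$ over $M_0$ and an ENI type over the prime model of $M_1M_2$ that is orthogonal to both $M_1$ and $M_2$. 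Iterating over an independent family $\{M_i\}$ over $M_0$, we can code an arbitrary graph on $\omega$: put ENI realizations over the prime model of $M_iM_j$ exactly when $\{i,j\}$ is an edge. This yields $2^\ctb$ models.
\end{enumerate}

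For \emph{sufficiency}, assume (1)--(4) and show there are only countably many countable models. We use the SHM decomposition. Every countable $\mc M\sat T$ is prime and minimal over an ENI-free tree decomposition: a tree of finite sets $A_\eta$, with each $A_\nu$ for $\nu$ a child of $\eta$ a realization of an ENI (or supportive) weight-one type over the prime model of $A_\eta$, and the children of each node independent. ENI-NDOP (4) guarantees that $\mc M$ is prime over such a tree. Condition (3) bounds the depth: any ENI type non-orthogonal to a node deep in the tree must already be non-orthogonal to $\emptyset$, so the tree collapses to depth essentially $1$ over a finite root. Condition (2) makes the branching over each node fall into finitely many $\not\perp$-classes, and (1) does the same for supportive types at the top level. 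The isomorphism type of $\mc M$ is therefore determined by a finite root type together with finitely many dimensions in $\omega+1$, together with, for each of finitely many ENI classes, the number of children of each dimension pattern. That is countably many invariants, and uniqueness of prime models over the trees shows they determine $\mc M$.

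The main obstacle is the sufficiency direction, specifically the decomposition theorem: proving that under ENI-NDOP every countable model is prime over an ENI tree, and that the prime model over the tree is determined by these invariants. I would import this from \cite{SHM} rather than reprove it, and spend the effort on checking that (3) really collapses the depth so that only finitely many invariants occur at each level.
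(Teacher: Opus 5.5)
Measured against the SHM constructions that the paper recalls in its later subsections (it cites this theorem rather than proving it), your plan has genuine gaps in both directions.

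\emph{Necessity.} Your case (3) construction does not give pairwise non-isomorphic models. The $C_n$ are independent realizations of one stationary type over $A$, so nothing distinguishes one from another. The only invariants of your models are the two cardinals $|\{n:\dim=0\}|$ and $|\{n:\dim=\omega\}|$, which yields countably many isomorphism types. You also never genuinely use $C/A\perp\emptyset$: independent copies over $A$ exist for any type. The SHM construction, recalled in the subsection on ENI-depth more than $2$, needs one more level. One takes $a\subseteq b\subseteq c$ with $b/a$ and $c/b$ of weight one, $c/b\perp a$, and $p\perp b$. One then realizes many independent copies $x$ of $b$ over $a$, and attaches to each a prescribed number $m$ of independent copies of $c$ over which $p$ has finite dimension. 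This is where $c/b\perp a$ is used, to make $m([x])$ an invariant of the $\not\indep$-class. The model is then coded by the \emph{set} $\{m([x])\}\subseteq\omega\setminus\{0\}$ of values. Case (1) rests on a wrong definition. Supportive types are the weight-one types $C/A$ that some ENI type needs ($p\not\perp C$, $p\perp A$), not types of freely adjustable dimension. They may be isolated, hence realized in every model containing their domain, so ``dimension zero for $k\notin X$'' is unavailable. SHM instead vary the ENI type $p^n$ sitting over a realization of $q_n$, coding $n\in X$ by whether some copy of $(b_n,c_n,d_n)$ has $\dim(p^n_d)=\omega$. Two smaller points: compactness does not put infinitely many types over one common finite set, and in (4) the real work is only asserted. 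That work is reading the graph back off the model up to isomorphism, which the paper does via Baldwin's auxiliary graph $H$.

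\emph{Sufficiency.} Condition (3) does not say the ENI type is non-orthogonal to $\emptyset$; it says the weight-one type $C/A$ it hangs on is. So ENI types can sit at depth $2$, and the tree does not collapse to depth $1$. Shelah's example in the paper's countable case is of exactly this kind. More seriously, your final count is not a count. Dimension patterns range over the infinite set $(\omega+1)^l$, so ``the number of children of each pattern'' is a function from a countably infinite set to $\omega+1$, and there are $2^{\ctb}$ such functions. The data is not canonical either. In Shelah's example, $\dim q_\alpha$ takes every finite value as $\alpha$ ranges over a Sort II component, so the pattern at a node depends on which element you pick. To get only $\ctb$ models you need a further argument, derived from (1)--(4), that only finitely many coarse invariants matter. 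This would be an analogue of the finitely many $\sim_r$-classes the paper takes from Bouscaren in the countable case. The decomposition theorem alone does not supply it, and your plan has no substitute.
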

\renewcommand{\itemautorefname}{Theorem}

In each of these cases, Shelah-Harrington-Makkai proves that the failure of each condition witnesses $2^\ctb$ models. We will analyze such constructions and show that those models are pairwise distinguishable in $\Loin(T)$.

Now we start analyzing the four conditions in \autoref{ctbconds} case-by-case (roughly in increasing order of difficulty). Some notational conventions: We may start using lowercase letters to denote tuples, without adding bars. And superscripts on types are for indexing only (to avoid double subscripts).

\begin{rmk}
We would like to remind the reader that while Shelah-Harrington-Makkai's analysis in every case below is relative to a parameter, we will not be mentioning them explicitly except when necessary, in light of \autoref{paramConvention}.
\end{rmk}

\subsection{Infinite width on the first level of the ENI tree}
Suppose \autoref{ctbconds:fincls} fails. Following the analysis in Shelah-Harrington-Makkai, we can find:
\begin{itemize}
  \item A sequence $\set{q_n}_{n\in\omega}$ of supportive types such that any isomorphic copy of $q_m$ is orthogonal to any isomorphic copy of $q_n$ for all $m\ne n$;
  \item Tuples $b_n=\dom q_n$;
  \item Some realizations $c_n$ of $q_n$; (We may assume $c_n\speq b_n$ by throwing in $b_n$ and extending $q_n$.)
  \item ENI types $p^n$ needing $c_n/b_n$; (We may assume $p^n$ is non-isolated.)
  \item Tuples $d_n=\dom p^n$; (We may similarly assume $d_n\speq c_n$.)
\end{itemize}
Using the above information, a family $\mc M_X$ of models of $T$ for all $X\sbeq\omega$ is constructed, where $X$ can be recovered from $\mc M_X$ as follows: For any $n\in\omega$, $n\in X$ if and only if $\mc M_X$ satisfies:
\[\exists b\exists c\exists d(\tp(bcd)=\tp(b_nc_nd_n)\wedge\dim(p^n_d,\mc M_X)=\omega).\]
In our notation, this is equivalent to the formula $\exists b\exists c\exists d(t(bcd)\wedge\psi_{r,p^n,\omega}(d))$, where $t$ is the $\mc L_{type}$-predicate for $\tp(b_nc_nd_n)$ and $r=\tp(d_n)$. In other words, the distinguishing formula $\chi_X$ for $\mc M_X$ is \[(\bigdoublewedge_{n\in X}\exists b\exists c\exists d(t(bcd)\wedge\psi_{r,p^n,\omega}(d)))\wedge(\bigdoublewedge_{n\notin X}\neg\exists b\exists c\exists d(t(bcd)\wedge\psi_{r,p^n,\omega}(d))).\] This is $\Piin_4$ over $\Ltype(T)$, so we are done.

\subsection{Infinite width on the second level of the ENI tree}
Suppose \autoref{ctbconds:fineni} fails. Take a finite set $a$ witnessing the failure. Following the analysis in Shelah-Harrington-Makkai, we can find:

\begin{itemize}
  \item A sequence $\set{p^n}_{n\in\omega}$ of ENI types whose domain is in the prime model over $a$, such that any isomorphic copy of $p^m$ is orthogonal to any isomorphic copy of $p^n$ for all $m\ne n$;
  \item Tuples $b_n=\dom p^n$.
  \item Types $q_n=\tp(b_n)$.
\end{itemize}

Using this information, a family $\mc M_X$ of models of $T$ for all $X\sbeq\omega$ is constructed, where $X$ can be recovered from $\mc M_X$ as follows: For any $n\in\omega$, $n\in X$ if and only if $\mc M_X$ satisfies:
\[\exists b(\tp(b)=q_n\wedge\dim(p^n_b,\mc M_X)<\omega).\]
In our notation, this is equivalent to the formula $\exists b\,\neg\psi_{q_n,p^n,\omega}(b)$. This is $\Siin_2$ over $\Ltype(T)$. Hence, we are able to distinguish $2^\ctb$ many models with formulas that are $\Piin_3$ over $\Ltype(T)$, so we are done.

\subsection{ENI-depth more than 2}
Suppose \autoref{ctbconds:depth} fails. Following the analysis in Shelah-Harrington-Makkai, we can find finite tuples $a\sbeq b\sbeq c$, ENI type $p$ with domain $c$, such that $c/b,b/a$ stationary of weight 1, $p\perp b, c/b\perp a$. We will introduce some auxiliary notions used in the proof, verifying their definability (in $\Loin(T)$) at the same time. In the end, this will lead to a definable way of distinguishing $2^\ctb$ many models.

\begin{itemize}
  \item A \emph{class} is an equivalence class of an element $x$ which has the same type as $b$, under the equivalence relation $\not\indep$. The class of any $x$ is denoted $[x]$. Clearly, the statement $y\in[x]$ is quantifier-free definable in $\Ltype$, because of \autoref{indep} and the fact that we know the type of $x$.
  \item For an element $x$ having the same type as $b$, $m(x)$ is the supremum of all $m\le\omega$ such that there is a system $\set{y_i}_{i<m}$ independent over $x$ such that $\tp(xy_i)=\tp(bc)$ and each $p_{y_i}$ has finite dimension in the model. Saying $m(x)\ge n$, for any fixed $n\in\omega$, is $\Siin_2(\Ltype)$-definable using \autoref{indep} and \autoref{dim}.
  \item For a class $\mc B$, $m(\mc B)$ is the supremum of $m(x)$ over all $x$ in the class. Combining the previous arguments, for a fixed $n\in\omega$, $m([x])\ge n$ is also $\Siin_2(\Ltype)$-definable from $x$.
\end{itemize}
At last, a family of models $\mc M_X$ (for $X\sbeq\omega\bsl\set0$) is defined in the proof, where $n\in X\iff\mc M_X$ satisfies that: There is a class $\mc B$ with $m([\mc B])=n$; or equivalently, there is an element $x$ such that $m([x])=n$. The statement that the above correspondence holds for all $n\in\omega$ is $\Piin_4(\Ltype)$ and distinguishes $\mc M_X$ from other models in the family, so we are done.

\subsection{ENI-DOP}
The last condition is significantly more difficult than the other ones: Suppose \autoref{ctbconds:ndop} fails. We refer to the analysis by Baldwin in the proof of \cite[Theorem XVI.3.1]{Baldwin}.

There, a family $\mc M_G$ of models are defined for connected symmetric graphs $G$ on a countable set of vertices (of which there are $2^\ctb$ many). But instead of coding $G$ (with domain) directly, another graph $H$ (as well as domains of $G$ and $H$) can be recovered from $\mc M_G$. $H$ is defined in a way such that $V(G)\sbeq V(H)$, and that $b,b'\in V(G)$ are connected by an edge in $G$ if and only if there exists an infinite set $A_{bb'}\sbeq V(H)\bsl V(G)$ such that every $a\in A_{bb'}$ has an edge with both $b$ and $b'$ in $H$. (For different $(b,b')$, the sets $A_{bb'}$ are disjoint.) 

Following the notation there, we write $V(G)=B, V(H)\bsl V(G)=A$. (Technically $B$ has one more point than $V(G)$ which is connected to every vertex in $V(G)$, but for our purpose it's fine to throw it in $B$ and slightly restrict the class of possible $G$'s.)

For our purpose, the witnessing models will be $\set{\mc N_X}_{X\sbeq\omega\bsl\set{0,1,2}}$, where $\mc N_X$ is $\mc M_G$ for some (fixed) $G$ that contains an $n$-cycle iff $n\in X$ (for all $n\in\omega, n\ge 3$). The distinguishing formulas say that $\mc N_X$ contains an $n$-cycle iff $n\in X$. It should be clear that as long as the domain and edge relation of $G$ is $\Loin$-definable from $\mc M_G$, then we are done. This is what we will check below.

Before we discuss the graph recovery process, here is some information that we can gather from the fact that ENI-DOP (and thus DOP) holds:
\begin{itemize}
  \item Finite tuples $n_0,n_1,n_2,n_3$ such that $n_0\sbeq n_i\sbeq n_3$ for $i=1,2$, $n_1\indep_{n_0}n_2, n_1\rhd_{n_2}n_3,n_2\rhd_{n_1}n_3, n_1\cup n_2\rhd_{n_0}n_3$. (The definition of $\rhd$ is not relevant to our analysis.)

  For notational convenience, we will assume $n_0$ is the empty set by incorporating (the types) $n_0$ in all types that we will mention. So we are always ``working over $n_0$'' from now on.
  \item Types $q_i=\tp(n_i)$ for $i=1,2$.
  \item Type $p$ over $n_3$ such that $p\dashv n_i$ for $i=1,2$.
\end{itemize}
Another piece of notation: Let $Q(\mc M)$ denote the set of elements realizing of $q_1$ or $q_2$ in $\mc M$. This is clearly $\Ltype$-definable in $\mc M$ (by $q_1\vee q_2$). In the construction, elements of $A$ will realize $q_1$ and elements of $B$ will realize $q_2$, so $A\cup B\sbeq Q$.

Yet another relevant definition: say $\ang{a,b,e_{ab}}$ is a \emph{germane triple} if $abe_{ab}\sat\tp(n_1n_2n_3)$, and let $p_{ab}$ denote a copy of $p$ over $e_{ab}$ for some germane triple containing $a,b$. The $\Ltype$-definability of $p_{ab}$ from $a,b$ is similarly clear (provided we do not care about the actual $e_{ab}$, which is the case here).

Now we verify that the graph $G$ (including its domain) can be recovered from $M_G$. Recall from the last subsection that if $a$ is any element (having a fixed type), then $[a]$, the $\not\indep$-equivalence class of $a$, is quantifier-free definable in $\Ltype$.

\begin{itemize}
  \item We first recover the domains $A$ and $B$. There are parameters $\hat a,\hat b$ such that for all $c\in Q, [c]\in[A]\iff\dim(p_{c\hat b},\mc M_G)<\omega$; and $[c]\in[B]\iff\dim(p_{\hat ac},\mc M_G)<\omega$. Clearly these domains are $\Siin_2(\Ltype)$-definable using \autoref{dim} (up to modifying $p_{\bar m}$ with the germane triple definition).
  \item Now we recover the edge relation. For any infinite $X\sbeq A, b\in B$, say $X\mc Fb$ if $\ang{x,b}\in H$ for cofinitely many $x\in X$. Then we have:\footnote{Technically, we need the germane pairs to follow some normal position convention, as in \cite[Convention 3.14]{Baldwin}. But as the proposition preceding that convention shows, switching the order of $a$ and $b$ everywhere, if necessary, suffices.}
      \[X\mc Fb\iff\exists b'\in[b]\forall^\infty x\in X R(x,b').\] where $\forall^\infty x\in X$ means ``for cofinitely many $x\in X$,'' and $R(x,b)$ means $\exists x'\in[x]\dim(p_{x'b},\mc M_G)<\omega$ (thus $\Siin_2(\Ltype)$-definable). Using this, we can recover the edge relation of $G$ without using set quantification:
      \begin{prop}
      For $b_1,b_2\in B$, we have \[\ang{b_1,b_2}\in G\iff{\exists b_1'\in[b_1]}{\exists b_2'\in[b_2]}{\exists^\infty x}(R(x,b_1')\wedge R(x,b_2')).\]
      In particular, the edge relation on $G$ can be recovered from $\mc M_G$ using a $\Siin_4(\Ltype)$ formula.
      \end{prop}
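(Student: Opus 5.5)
The plan has two parts: prove the displayed equivalence from the characterization of $G$ via $H$ and the definition of $\mc F$, then count quantifiers. For the forward direction, suppose $\ang{b_1,b_2}\in G$. The construction gives an infinite set $A_{b_1b_2}\sbeq A$ with $\ang{x,b_1},\ang{x,b_2}\in H$ for every $x\in A_{b_1b_2}$. So $A_{b_1b_2}\mc Fb_1$ and $A_{b_1b_2}\mc Fb_2$. The displayed characterization of $\mc F$ then yields $b_1'\in[b_1]$ and $b_2'\in[b_2]$ such that $R(x,b_1')$ and $R(x,b_2')$ hold for cofinitely many $x\in A_{b_1b_2}$. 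Intersecting two cofinite subsets of an infinite set leaves infinitely many $x$ satisfying both, which witnesses $\exists^\infty x$.

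For the converse, fix $b_1',b_2'$ and an infinite set $X$ of $x$ with $R(x,b_1')\wedge R(x,b_2')$. The first step is to arrange $X\sbeq A$. Membership $[x]\in[A]$ should follow from $R(x,b_i')$, since $R$ asserts finite dimension of some $p_{x'b'}$ with $x'\in[x]$, and by Baldwin's analysis this forces $x$ into the $A$-side. Failing that, I would conjoin the $\Siin_2$ domain formula for $A$ to the matrix. Next I would use that $R(x,b')$ for $b'\in[b]$ holds iff $\ang{x,b}\in H$ up to finitely many exceptions, as in the proof of the $\mc F$ characterization. Then infinitely many $x\in A$ are adjacent in $H$ to both $b_1$ and $b_2$. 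By the construction of $H$, every common neighbour of $b_1,b_2$ outside $V(G)$ lies in $A_{b_1b_2}$, and these sets are nonempty only for edges of $G$. Hence $\ang{b_1,b_2}\in G$. The main obstacle is exactly this step: passing from infinitely many $x$ with $R$ to the $H$-edges, given that $R$ only approximates $H$ modulo $\not\indep$-classes and finitely many exceptions. I would handle it by citing the properties of $\mc M_G$ from Baldwin, namely that the failures are confined to finitely many $x$ for fixed $b'$, together with the normal-position convention.

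For the complexity count, $y\in[x]$ is quantifier-free in $\Ltype$ by \autoref{indep}, and $R$ is $\Siin_2(\Ltype)$. The formula $\exists^\infty x\,\theta(x)$ is $\bigdoublewedge_{k}\exists x_1\ldots x_k(\bigwedge_{i\neq j}x_i\neq x_j\wedge\bigwedge_i\theta(x_i))$. With $\theta$ a $\Siin_2$ formula this is $\Piin_3$. The outer existential quantifiers over $b_1',b_2'$, together with the quantifier-free conditions $b_i'\in[b_i]$, then give a $\Siin_4(\Ltype)$ formula. This is the claimed bound.
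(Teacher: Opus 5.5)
Your proposal follows the paper's proof. The forward direction is identical: take the infinite set of common $H$-neighbours, use the $\mc F$-characterization to get $b_1',b_2'$, and intersect the two cofinite sets. In the converse, the paper applies the right-to-left direction of the $\mc F$-characterization directly to the witness set $X$, obtaining $X\mc Fb_1$ and $X\mc Fb_2$ in one step, so your pointwise ``iff up to finitely many exceptions'' claim is not needed (and only its forward half would be used anyway). The paper also takes for granted the point you flag, that the witnesses lie in $A$, and your quantifier count is exactly the intended $\Siin_4(\Ltype)$ computation.
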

      \begin{proof}
        ($\Leftarrow$) Choose witnesses $b_1',b_2'$, and let $X$ be the infinite set of witnesses $x$. Then we have $X\mc Fb_1$ and $X\mc Fb_2$, so $b_1,b_2$ share infinitely many common neighbors in $H$, thus are connected in $G$ (by definition of $H$).

        ($\Rightarrow$) By definition of $H$, let the infinite set of common $H$-neighbors of $b_1,b_2$ be $X_0$. Since $X_0\mc Fb_1, X_0\mc Fb_2$, choose corresponding witnesses $b_1'\in[b_1], b_2'\in[b_2]$. Let $F_1,F_2$ be finite sets such that $R(x,b_i')$ for all $x\in X_0\bsl F_i$. Let $X=X_0\bsl(F_1\cup F_2)$, clearly also infinite. Then $b_1',b_2',X$ witness the satisfaction of the RHS formula.
      \end{proof}
      The above proposition presents $G$ as a quotient structure (by the relation $\not\indep$), such that:
      \begin{itemize}
        \item the quotient relation is quantifier-free definable in $\Ltype$;
        \item the domain is $\Siin_2(\Ltype)$-definable; and
        \item the edge relation is $\Siin_4(\Ltype)$-definable.
      \end{itemize}
      As a result, any positive atomic fact about $G$ can be recovered from $\mc M_G$ in a $\Siin_4(\Ltype)$-definable way. Now given any $X\sbeq\omega$, consider the sentence that says for all $n\in\omega$, the copy of $G$ recovered from $\mc M_G$ has an $(n+3)$-cycle iff $n\in X$. Such statements are $\Piin_5(\Ltype)$ and distinguishes $2^\ctb$ many $\mc M_G$, so we are done.
\end{itemize}

With all of the above, we have finally established our main result.

\main

In the uncountable case, \autoref{paramConvention} explains why $\Siin_6(\Ltype)$ is needed rather than $\Piin_5(\Ltype)$, and then we get the stated $\Piin_6(\Ltype)$ result by taking negations.

\section{Open Questions}
First, we would like to ask whether the uniform finite bounds given in \autoref{main} are optimal. Namely:
\begin{quest}
  \begin{itemize}
    \item Is there a complete $\omega$-stable $T$ with countably many countable models, such that some model has no $\Piin_4(\Ltype)$ Scott sentence?
    \item Is there a complete $\omega$-stable $T$ with uncountably many countable models, such that there are only countably many equivalence classes of such models up to $\Piin_5(\Ltype)$-equivalence?
  \end{itemize}
\end{quest}

In addition, as mentioned in \autoref{wVCcomparison}, the type $\omega$-Vaught's conjecture implies only the $(\omega+\omega)$-Vaught's conjecture. While we can reduce the complexity down to $\omega+6$ as in \autoref{wVCbound}, it still remains open as to whether the $\omega$-Vaught's conjecture for $\omega$-stable theories hold. The main obstacle is:
\begin{quest}
Is there a boundedly axiomatizable $\omega$-stable theory with a type that is not boundedly axiomatizable?
\end{quest}
A negative answer to this question would establish the $\omega$-Vaught's conjecture for $\omega$-stable theories. On the other hand, the question has a positive answer without the $\omega$-stable requirement \cite{bddUnbdd}, and the example there is strictly superstable. Though even if an $\omega$-stable example were to exist, it would still leave open the $\omega$-Vaught's conjecture for $\omega$-stable theories.

\begin{quest}
  Does the $\omega$-Vaught's conjecture hold for $\omega$-stable theories?
\end{quest}

\section*{AI Disclosure}

The author used ChatGPT-5.6 Sol for proofreading the paper, in particular checking for grammatical and mathematical errors.
The research was completed without the use of AI tools, and the author takes full responsibility for the content of this paper.

\printbibliography

\end{document}